\newtheorem{theorem}{Theorem}[section]
\newtheorem{rem}[theorem]{Remark}
\newtheorem{defn}[theorem]{Definition}
\newtheorem{lemma}[theorem]{Lemma}
\newtheorem{conjecture}[theorem]{Conjecture}
\newtheorem{corollary}[theorem]{Corollary}
\begin{document}
\thanks{We are in debt to O. Debarre, Z. Jiang, R. Lazarsfeld, J. Koll\'ar and M. Popa for many useful comments. The first author was partially supported by TIMS, NCTS/TPE
and National Science Council of Taiwan.  The second author was
partially supported by NSF Grant 0757897.}
\title[Kodaira dimension of irregular varieties]{Kodaira dimension of irregular varieties}
\author{Jungkai Alfred Chen}
\address{Department of Mathematics \\
National Taiwan University\\ Taipei, 106, Taiwan}
\email{jkChen@math.ntu.edu.tw}
\author{Christopher D. Hacon}
\date{\today}
\address{Department of Mathematics \\
University of Utah\\
155 South 1400 East\\
JWB 233\\
Salt Lake City, UT 84112, USA}
\email{hacon@math.utah.edu}
\begin{abstract} Let $f:X\to Y$ be an algebraic fiber space with general fiber $F$.
If $Y$ is of maximal Albanese dimension, we show that $\kappa (X)\geq \kappa (Y)+\kappa (F)$.
\end{abstract}

\maketitle
\section{Introduction}
Iitaka's $C_{n,m}$ conjecture states:
\begin{conjecture}\label{C-I} Let $f:X\to Y$ be an algebraic fiber space with generic geometric fiber $F$, $\dim X=n$ and $\dim Y=m$. Then $$\kappa (X)\geq \kappa (Y)+\kappa (F).$$
\end{conjecture}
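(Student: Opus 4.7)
The plan is to follow the Viehweg--Kawamata framework based on weak positivity of direct images. The broad scheme is: first establish that for every sufficiently divisible $m$ the sheaf $f_*\omega_{X/Y}^{\otimes m}$ is weakly positive on a dense open $U \subseteq Y$; then combine this positivity with whatever positivity $K_Y$ itself carries to produce $\kappa(Y)+\kappa(F)$ independent pluricanonical sections of $X$. Concretely, weak positivity of $f_*\omega_{X/Y}^{\otimes m}$ together with any big line bundle $A$ on $Y$ gives, via Viehweg's covering trick, an injection of $A$ into a symmetric power of $f_*\omega_{X/Y}^{\otimes m}$ modulo correction terms, yielding $\kappa(X) \geq \dim Y + \kappa(F)$ whenever $K_Y$ is big.

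Next I would invoke the Iitaka fibration $g\colon Y \dashrightarrow Z$ of the base. After birational modifications one has $Y' \to Z$ with $\dim Z = \kappa(Y)$, $Z$ of general type, and general fibre $W$ satisfying $\kappa(W)=0$. Composing with (a resolution of) $f$ produces a tower $X' \to Y' \to Z$; by Viehweg's theorem (the case of $C_{n,m}$ in which the base is of general type), applied to $X' \to Z$, it is enough to prove $\kappa(G) \geq \kappa(F)$ for $G$ the generic fibre of $X' \to Z$. Since $G \to W$ has base $W$ with $\kappa(W)=0$ and fibre $F$, the entire problem is reduced to the residual case $\kappa(Y) = 0$, i.e.\ proving $\kappa(X) \geq \kappa(F)$ when the base has Kodaira dimension zero.

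In this residual case, if $Y$ happens to be of maximal Albanese dimension, the main theorem of the present paper applies: the Albanese morphism $Y \to \mathrm{Alb}(Y)$ provides the required substitute for positivity on $K_Y$, and generic vanishing, Fourier--Mukai transform, and continuous evaluation of $f_*\omega_{X/Y}^{\otimes m}$ against topologically trivial line bundles extract the missing sections. In full generality, one would want to reduce to this case by decomposing (a good minimal model of) $Y$ into an abelian factor plus a simply-connected factor of Calabi--Yau / holomorphic symplectic type, and then treat the two factors separately.

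The main obstacle is precisely this final reduction. A Beauville--Bogomolov-type splitting for singular minimal models with $K_Y \equiv 0$ is not available at our current level of technology and in any case presupposes the abundance conjecture, itself a major open problem. Without such a structural result, weak positivity of $f_*\omega_{X/Y}^{\otimes m}$ over a $Y$ with $\kappa(Y)=0$ and trivial Albanese dimension has no ambient positivity to couple with, and the extraction step produces no sections. This is why this paper proves $\kappa(X) \geq \kappa(Y)+\kappa(F)$ only under the maximal Albanese dimension hypothesis on $Y$, and the statement as worded -- the full Iitaka $C_{n,m}$ conjecture -- remains open, with its resolution apparently requiring abundance plus a singular analogue of Beauville--Bogomolov, or alternatively a genuinely new positivity theorem for Hodge-theoretic direct images that is independent of any positivity of the base.
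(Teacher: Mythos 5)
The statement you were asked to prove is Conjecture~\ref{C-I}, and the paper does not prove it: it is stated as a conjecture, and the paper establishes only the special case in which the base $Y$ has maximal Albanese dimension (Theorem~\ref{T-I}). Your assessment is therefore correct, and correctly calibrated: the full $C_{n,m}$ conjecture remains open, and no blind ``proof'' of it should succeed. Your sketch of the standard reduction is also essentially the right picture. Two small remarks of comparison with what the paper actually does in the case it can handle. First, the paper does not pass through the Iitaka fibration of $Y$; instead, after reducing to $Y$ finite over its Albanese variety, it invokes Kawamata's structure theorem to replace $Y$ (after \'etale covers) by a product $W\times K$ with $W$ of general type and $K$ abelian, then applies Viehweg's general-type-base case to $X\to W$ and the residual $\kappa=0$ case to the fibers over $K$. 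This is the precise mechanism by which the maximal Albanese dimension hypothesis substitutes for the Beauville--Bogomolov-type splitting you identify as the missing ingredient in general. Second, the residual case $\kappa(\text{base})=0$ is handled in the paper not by weak positivity alone but by the adjoint-ideal, Fourier--Mukai and generic-vanishing machinery culminating in Theorem~\ref{t-k} ($\kappa(X)=0$ forces $\kappa(F)=0$ for fibrations over abelian varieties), combined with easy addition along the Iitaka fibration of $X$. Your diagnosis of the obstruction --- that for a base with $\kappa(Y)=0$ and no Albanese positivity there is nothing to couple the weak positivity of $f_*\omega_{X/Y}^{\otimes m}$ against --- is exactly why the hypothesis of Theorem~\ref{T-I} is what it is.
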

This conjecture is of fundamental importance in the classification of higher dimensional complex projective varieties.
It is known to hold in many important special cases; amongst these:
\begin{enumerate}\item if $\kappa (Y)=\dim Y$, (cf. \cite{Kawamata81} and \cite{Viehweg82}),
\item if $\kappa (F)=\dim F$, (cf. \cite{Kollar87}),
\item if $F$ has a good minimal model (cf. \cite{Kawamata85}), and
\item if $\dim Y=1$, (cf. \cite{Kawamata82}).\end{enumerate}

In this paper we will prove the following:
\begin{theorem}\label{T-I} If $Y$ is of maximal Albanese dimension then Conjecture \ref{C-I} holds.\end{theorem}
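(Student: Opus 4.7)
\emph{Proof strategy.} The plan is to reduce the statement, using the Iitaka fibration of $Y$, to the case in which $Y$ itself is an abelian variety, and then to settle $C_{n,m}$ over an abelian base by combining Viehweg's weak positivity with generic vanishing on the target.

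First, after passing to smooth birational models, we consider the Iitaka fibration $g\colon Y\to Z$, where $\dim Z=\kappa(Y)$. Since $Y$ is of maximal Albanese dimension, a theorem of Kawamata (building on Ueno) describes $g$ very explicitly: the general fiber $Y_z$ is birational to an abelian variety and the base $Z$ is of general type. Forming $h:=g\circ f\colon X\to Z$, whose general fiber $X_z:=f^{-1}(Y_z)$ itself fibers over $Y_z$ with general fiber $F$, we observe that $Z$ is of general type; case (1) of the list in the introduction (due to Kawamata and Viehweg) therefore applies to $h$ and yields
\[
\kappa(X)\;\geq\;\kappa(Z)+\kappa(X_z)\;=\;\kappa(Y)+\kappa(X_z).
\]
Thus the theorem reduces to showing $\kappa(X_z)\geq\kappa(F)$, i.e., Iitaka's conjecture for the fiber space $X_z\to Y_z$ whose base is birational to an abelian variety.

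Next, replacing $X_z$ by its relative Iitaka model over $Y_z$, we may assume the general fiber $F$ is of general type, so the remaining task is $\kappa(X_z)\geq\dim F$. The two ingredients are then: (i) Viehweg's theorem that $f_*\omega_{X_z/Y_z}^{\otimes m}$ is weakly positive and, since the fibers of $f$ are of general type, big along the generic fiber for $m$ sufficiently divisible; and (ii) a Chen--Hacon type generic vanishing statement on $Y_z$ to the effect that $f_*\omega_{X_z/Y_z}^{\otimes m}$ is a $GV$-sheaf whose cohomology is detected by continuous twists $P\in\mathrm{Pic}^0(Y_z)$. Combining (i) and (ii) via the Fourier--Mukai transform on $Y_z$, we expect to produce on the order of $m^{\dim F}$ linearly independent sections of $\omega_{X_z}^{\otimes m}$, which forces $\kappa(X_z)\geq\dim F$.

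The principal obstacle is this last estimate: converting abstract generic vanishing plus Viehweg's fiberwise bigness into a quantitative lower bound of order $m^{\dim F}$ on $h^0(X_z,\omega_{X_z}^{\otimes m})$. This requires a careful analysis of how sections produced by continuous twists by $\mathrm{Pic}^0(Y_z)$ remain linearly independent after restriction to $X_z$, and ultimately of the stratification of $\mathrm{Pic}^0(Y_z)$ by cohomological support loci. By contrast, the first reduction via the Iitaka fibration of $Y$ is essentially formal, depending only on the Kawamata--Ueno structure of maximal Albanese dimension varieties with $\kappa=0$.
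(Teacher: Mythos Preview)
Your first reduction, via Kawamata's structure theorem for varieties of maximal Albanese dimension, to the case of an abelian base is the same as the paper's. The divergence, and the gap, is in how you handle $X_z\to Y_z$ with $Y_z$ abelian.

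The step ``replacing $X_z$ by its relative Iitaka model over $Y_z$, we may assume the general fiber $F$ is of general type'' is not justified. If $X_z\dashrightarrow W\to Y_z$ is the relative Iitaka fibration, then the fibers of $X_z\to W$ have Kodaira dimension~$0$, and to pass from a lower bound on $\kappa(W)$ to one on $\kappa(X_z)$ you would need $\kappa(X_z)\ge\kappa(W)$, i.e.\ $C_{n,m}$ for $X_z\to W$. But $W$ has no special structure (it is neither of maximal Albanese dimension nor of general type in general), so this is not available. The reduction is circular. Moreover, even granting the reduction, your ingredient~(ii) is the real problem: for $m>1$ the sheaf $f_*\omega_{X_z/Y_z}^{\otimes m}$ is \emph{not} known to be a GV-sheaf, and producing enough sections from weak positivity plus continuous twists is exactly the obstacle the paper's machinery (adjoint ideals $\mathrm{adj}_F$, the limiting sheaves $\mathcal G^\infty_{||D_F||}$, and the Fourier--Mukai argument of \S4) is designed to overcome. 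You have correctly located the difficulty but not resolved it.

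The paper's route over an abelian base is quite different and avoids both issues. Instead of the relative Iitaka fibration of $X_z\to Y_z$, it takes the \emph{absolute} Iitaka fibration $\phi:X_z\to Z$; its general fiber $G$ has $\kappa(G)=0$ and maps onto an abelian subvariety $K\subset Y_z$ with general fiber $F'=F\cap G$. The new theorem of the paper (Ueno's conjecture, Theorem~\ref{main-t}) applied to $G\to K$ gives $\kappa(F')=0$, and then easy addition for $\phi|_F:F\to Z$ yields $\kappa(F)\le\kappa(F')+\dim Z=\kappa(X_z)$. Thus the heart of the argument is Theorem~\ref{main-t}, whose proof occupies most of the paper; your outline bypasses it entirely, which is why the gap appears where it does.
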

Recall that by definition $Y$ is of maximal Albanese dimension if and only if its Albanese morphism $Y\to {\rm Alb}(Y)$ is generically finite.
Note that if $Y$ is a curve of non-negative Kodaira dimension (i.e. genus $g\geq 1$), then $Y$ is of maximal Albanese dimension. Therefore, we obtain an independent proof of the result of \cite{Kawamata82} mentioned above.

We also prove a conjecture of Ueno:
\begin{theorem}\label{main-t}Let $X$ be a smooth complex projective variety of Kodaira dimension $0$.
Then the Albanese morphism $a:X\to A$ is an algebraic fiber space (i.e. it is surjective with connected fibers) and if $F$ is the general fiber of $a$,
then $\kappa (F)=0$.\end{theorem}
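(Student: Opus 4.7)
The plan is to apply Theorem \ref{T-I} to the Stein factorization of the Albanese morphism, and then to combine the resulting Kodaira-dimension bounds with the structure theorem for varieties of maximal Albanese dimension and Kodaira dimension zero.

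Write $a: X \xrightarrow{g} W \xrightarrow{h} A$ for the Stein factorization, where $g$ has connected fibers and $h$ is finite onto $Y = a(X) \subset A$. Since $W$ finitely dominates the subvariety $Y$ of an abelian variety, it is of maximal Albanese dimension, so (after a resolution if needed) Theorem \ref{T-I} applied to $g$ gives
\[
0 = \kappa(X) \geq \kappa(W) + \kappa(F),
\]
so $\kappa(W) \leq 0$ and $\kappa(F) \leq 0$. On the other hand $K_A = 0$ gives $K_W \sim R$, where $R$ is the effective ramification divisor of $h$, so $\kappa(W) \geq 0$ and therefore $\kappa(W) = 0$.

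Next I would argue that $a$ is an algebraic fiber space. By the structure theorem for varieties of maximal Albanese dimension with $\kappa = 0$, the morphism $W \to {\rm Alb}(W)$ is birational. The functorial homomorphism $\phi: {\rm Alb}(W) \to A$ has image a subabelian variety containing $Y$; since $a(X) = Y$ generates $A$ as a group, this image is $A$, so $\phi$ is surjective. Comparing dimensions, $\dim {\rm Alb}(W) = \dim W = \dim Y \leq \dim A$ forces $Y = A$ and $\phi$ to be an isogeny. The functorial morphism $\psi: A = {\rm Alb}(X) \to {\rm Alb}(W)$ coming from $g$ satisfies $\phi \circ \psi = {\rm id}_A$ by the universal property of the Albanese, so $\phi$ is an isomorphism; then $h: W \to A$ is finite and birational onto the normal variety $A$, hence an isomorphism by Zariski's main theorem. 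Consequently $W = A$, so $a$ is surjective with connected fibers.

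It remains to show $\kappa(F) \geq 0$. If instead $\kappa(F) = -\infty$, then because $K_A = 0$ yields $K_X|_F = K_F$, any section $\sigma \in H^0(X, mK_X)$ restricts to $H^0(F, mK_F) = 0$ on every general fiber, forcing $\sigma$ to vanish on a dense open subset of $X$ and therefore everywhere. This would give $\kappa(X) = -\infty$, a contradiction. The main obstacle is the second step: extracting triviality of the Stein factorization relies essentially on the earlier structure theorem together with several careful applications of the universal property of the Albanese morphism; the other two steps are direct consequences of Theorem \ref{T-I} and of $K_A = 0$.
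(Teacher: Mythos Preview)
Your argument is essentially correct but far more elaborate than the paper's. The paper simply cites \cite{Kawamata81} for the fact that the Albanese map of a variety with $\kappa(X)=0$ is already an algebraic fiber space, and then applies Theorem~\ref{T-I} directly to $a:X\to A$: since $\kappa(A)=0$ one gets $0=\kappa(X)\ge\kappa(A)+\kappa(F)=\kappa(F)$, while $\kappa(F)\ge 0$ follows from $\kappa(X)\ge 0$ exactly as in your final paragraph. Your route instead re-derives the algebraic-fiber-space property by Stein-factorising, forcing $\kappa(W)=0$ via Theorem~\ref{T-I}, and then invoking Kawamata's structure theorem (a maximal-Albanese-dimension variety with $\kappa=0$ is birational to its Albanese) together with universal properties to conclude $W=A$. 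This is a legitimate alternative, and it makes explicit that once Theorem~\ref{T-I} is available the two statements in \cite{Kawamata81} become equivalent; but it is circuitous, since the structure theorem you invoke lives in the same paper and has comparable depth to the statement you are avoiding.

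Two points need tightening. First, from $0\ge\kappa(W)+\kappa(F)$ you cannot yet deduce ``$\kappa(W)\le 0$ and $\kappa(F)\le 0$'': if either is $-\infty$ the inequality gives no bound on the other. Establish $\kappa(F)\ge 0$ first (your restriction argument works for the general fiber of \emph{any} algebraic fiber space, since $K_X|_F\cong K_F$ regardless of the base, so it does not require $W=A$); then $\kappa(W)\ge 0$ together with the inequality forces $\kappa(W)=\kappa(F)=0$. Second, writing ``$K_A=0$ gives $K_W\sim R$'' presumes $h$ is finite onto $A$, which is exactly what you have not yet shown; at this stage $h$ is finite only onto $Y=a(X)\subset A$. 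The conclusion $\kappa(W)\ge 0$ is still true---any variety with a generically finite map to a subvariety of an abelian variety has nonnegative Kodaira dimension, for instance by pulling back global sections of $\Omega_A^{\dim W}$---but the justification should be phrased this way.
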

\begin{proof} The fact that $a:X\to A$ is an algebraic fiber space  is shown in \cite{Kawamata81}. By \eqref{T-I}, we have $0=\kappa (X)\geq \kappa (F)+\kappa (A)=\kappa (F)$. Since $\kappa (X)\geq 0$, we have $\kappa (F)\geq 0$ and so $\kappa (F)=0$.
\end{proof}

We remark that in \cite{CH09} we showed a weaker version of \eqref{main-t}, namely we showed that $\kappa (F)\leq \dim A$. In this paper we will make use of some of the techniques of \cite{CH09} and in particular
we will make use of multiplier ideal sheaf techniques, Koll\'ar vanishing theorems and the Fourier-Mukai transform.

\section{Preliminaries}
\subsection{Notation}
We work over the field of complex numbers $\mathbb C$.

Let $X$ be a smooth complex projective variety, then the {\bf Kodaira dimension} $\kappa (X)\in \{ -\infty, 0 , 1, \ldots , \dim X \}$ is defined as follows: $\kappa (X)=-\infty$ if $|mK_X|=\emptyset$ for all $m>0$ and otherwise $$\kappa (X)={\rm max}\{\dim \phi _{|mK_X|}(X) | m>0 \}.$$

Let $f:X\to Y$ be a projective morphism of normal varieties, then $f$ is an {\bf algebraic fiber space} if $f$ is surjective with connected general geometric fiber $F=F_{X/Y}$.

If $D$ is a divisor on a projective variety, then ${\bf B}(D)=\cap _{m>0}{\rm Bs}(mD)$ denotes the {\bf stable base locus} of $D$ and  if $V_i\subset |iD|$ is a sequence of linear series such that $V_i\cdot V_j\subset V_{i+j}$, then  ${\bf B}(V_\bullet )=\cap _{m> 0}{\rm Bs}(V_m)$ is the {\bf stable base locus} of $V_\bullet$.
\subsection{Fourier Mukai transform}\label{s-FM}
Let $\mathcal P$ be the normalized Poincar\'e bundle on $A\times \hat A$
and $\hat{\mathcal S}$ be the functor from the category of
$\mathcal O _A$-modules to that of $\mathcal O _{\hat A}$-modules
defined by $$\hat{\mathcal S}(M)=\pi _{\hat A,*}(\mathcal P \otimes \pi _A^*M).$$
By \cite[2.2]{Mukai81} we have an isomorphisms of functors
$$R\mathcal S\circ R\hat{\mathcal S}\cong (-1_A)^*[-g]\qquad {\rm and}\qquad R\hat{\mathcal S}\circ R\mathcal S\cong (-1_{\hat A})^*[-g]$$
so that $R\mathcal S$ gives an equivalence of categories
between $D(\hat A) $ and $D(A)$ (and similarly for $D^b$, $D^-_{qc}$ and $D^-_c$).

By \cite[3.1, 3.8]{Mukai81} we have
$$\Delta _A\circ R\mathcal S\cong ((-1_A)^*\circ R\mathcal S\circ \Delta _{\hat A})[g],\qquad {\rm and}$$
$$R\mathcal S\circ T_{\hat x}^*\cong (\otimes P_{-\hat x})\circ R\mathcal S,\qquad
R\mathcal S\circ (\otimes P_{x})\cong T_{x}^*\circ R\mathcal S.$$

If $L$ is an ample line bundle on $\hat A$, then there is an isogeny $\phi _L:\hat A\to A$ defined by $\phi _L (\hat a)=t^*_{\hat a}L^\vee \otimes L$. Then $\hat {L}=R^0\mathcal S (L)$ is a vector bundle on $A$ such that $\phi _L^*(\hat L ^\vee)\cong L^{\oplus h}$ where $h=h^0(L)$.
We will need the following:
\begin{lemma}\label{l-GD} Let $\mathcal F$ be a quasi-coherent sheaf on $A$. Then
$$D_k(R\Gamma (\mathcal F\otimes \hat L ^\vee))\cong R\Gamma (R\hat {\mathcal S}(D_A(\mathcal F))\otimes L).$$
\end{lemma}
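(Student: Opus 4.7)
The plan is to combine Serre--Grothendieck duality on $A$ with a projection-formula style exchange identity between $R\mathcal S$ and $R\hat{\mathcal S}$, reducing both sides of the claimed isomorphism to the same global sections on $A\times \hat A$.

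First, since $A$ is a smooth projective abelian variety of dimension $g$ with trivial canonical bundle, Grothendieck--Serre duality gives
$$D_k\bigl(R\Gamma(\mathcal G)\bigr)\cong R\Gamma\bigl(D_A(\mathcal G)\bigr),$$
where $D_A(-)=R\mathcal Hom(-,\OO_A)[g]$. I would apply this with $\mathcal G=\mathcal F\ot \hat L^\vee$. Because $\hat L$ is a vector bundle on $A$ (as noted just before the lemma statement), one has $D_A(\mathcal F\ot \hat L^\vee)\cong D_A(\mathcal F)\ot \hat L$, and hence
$$D_k\bigl(R\Gamma(\mathcal F\ot \hat L^\vee)\bigr)\cong R\Gamma\bigl(D_A(\mathcal F)\ot \hat L\bigr).$$

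Second, I would observe that $\hat L$ is actually the full derived transform $R\mathcal S(L)$ and not merely its zeroth cohomology: since $L$ is ample on $\hat A$, for every $a\in A$ the twist $L\ot P_a$ is again ample, so $H^{>0}(\hat A,L\ot P_a)=0$, and cohomology and base change yield $R^i\mathcal S(L)=0$ for $i>0$. Thus $\hat L\cong R\mathcal S(L)$ in $D(A)$.

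Third, I would establish the exchange identity
$$R\Gamma_A\bigl(\mathcal G\ot^L R\mathcal S(\mathcal N)\bigr)\cong R\Gamma_{\hat A}\bigl(R\hat{\mathcal S}(\mathcal G)\ot^L \mathcal N\bigr),$$
valid for any $\mathcal G\in D(A)$ and $\mathcal N\in D(\hat A)$. Unwinding the definitions of $R\mathcal S$ and $R\hat{\mathcal S}$ as push--pull through $\mathcal P$ on $A\times \hat A$ and applying the projection formula to each of the two projections, both sides are canonically isomorphic to $R\Gamma_{A\times \hat A}(\pi_A^*\mathcal G\ot^L \mathcal P\ot^L \pi_{\hat A}^*\mathcal N)$. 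Plugging in $\mathcal G=D_A(\mathcal F)$ and $\mathcal N=L$ and chaining with the previous two steps yields the claimed isomorphism.

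The main subtlety I anticipate is the Serre duality input, since $\mathcal F$ is only assumed quasi-coherent: one should either reduce to the coherent case (sufficient for the applications of the lemma) or quote Grothendieck duality in the generality of $D^-_{qc}(A)$. Once this foundational point is handled, the remaining manipulations are essentially formal identities on $A\times \hat A$.
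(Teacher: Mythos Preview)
Your argument is correct and is exactly the approach the paper has in mind: the paper's own proof simply says that the lemma is an easy consequence of Grothendieck duality and the projection formula, referring to the beginning of the proof of \cite[1.2]{Hacon04}. You have spelled out precisely that computation, including the identification $\hat L\cong R\mathcal S(L)$ and the exchange identity via the Poincar\'e bundle on $A\times \hat A$, and you have also correctly flagged the quasi-coherent versus coherent subtlety.
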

\begin{proof} This is an easy consequence of Groethendieck Duality and the projection formula (see the beginning of the proof of \cite[1.2]{Hacon04}).
\end{proof}
\section{Adjoint ideals}
\subsection{Adjoint ideals}
The following definition is due to Takagi (cf. \cite[3.3]{Eisenstein10}).
\begin{defn}Let $F\subset X$ be a smooth subvariety of codimension $g$ in a smooth variety and $D$ a $\mathbb Q$-divisor whose support does not contain $F$.
Let $\mu :X'\to X$ be the blow up of $X$ along $F$ with reduced exceptional divisor $E$, $\nu :X''\to X'$ a log resolution of $(X',\mu ^{-1}(F)+\mu ^{-1}(D))$ and let $\pi =\mu \circ \nu$. Then we define
$${\rm adj}_F(X,D):=\pi _* \mathcal O _{X''}(\lceil K_{X''/X}-\pi ^{-1}D-g\cdot \pi ^{-1}F+\nu ^{-1}_*E \rceil ).$$\end{defn}
Note that $${\rm adj}_F(X,D)\subset \mathcal J(X,D):=\pi _* \mathcal O _{X''}(\lceil K_{X''/X}-\pi ^{-1}D \rceil ).$$
By \cite[5.1]{Eisenstein10} we have:
\begin{theorem} With the above notation, there is a short exact sequence
$$0\to \mathcal J (X,gF+D)\to {\rm adj}_F(X,D)\to \mathcal J (F,D|_F)\to 0.$$
\end{theorem}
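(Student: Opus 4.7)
The plan is to realize both ends of the desired sequence as pushforwards from the resolution $X''$ of sheaves that sit in a short exact sequence on $X''$ built from the structure sequence of the strict transform $\tilde E:=\nu_*^{-1}E$.

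First I set up the divisor identities on $X''$. Since $\mu$ is the blow-up of the smooth codimension $g$ subvariety $F$, one has $\mu^{-1}\mathcal I_F\cdot\mathcal O_{X'}=\mathcal O_{X'}(-E)$, whence $\pi^{-1}F=\nu^*E=\tilde E+F'$ with $F'$ effective and $\nu$-exceptional. Setting
$$A:=K_{X''/X}-\pi^{-1}D-g\,\pi^{-1}F+\tilde E,\qquad B:=K_{X''/X}-\pi^{-1}(gF+D),$$
integrality of $\tilde E$ yields $\lceil A\rceil=\lceil B\rceil+\tilde E$, so tensoring $0\to\mathcal O_{X''}(-\tilde E)\to\mathcal O_{X''}\to\mathcal O_{\tilde E}\to 0$ by $\mathcal O_{X''}(\lceil A\rceil)$ produces
$$0\to\mathcal O_{X''}(\lceil B\rceil)\to\mathcal O_{X''}(\lceil A\rceil)\to\mathcal O_{\tilde E}\bigl(\lceil A\rceil|_{\tilde E}\bigr)\to 0.$$

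Next I apply $\pi_*$. The first two terms are by definition $\mathcal J(X,gF+D)$ and $\mathrm{adj}_F(X,D)$, and local vanishing for the multiplier ideal associated to $\pi^{-1}(gF+D)$ gives $R^1\pi_*\mathcal O_{X''}(\lceil B\rceil)=0$, so exactness persists and it suffices to identify the third term with $\mathcal J(F,D|_F)$. Using $K_{X''/X}=K_{X''/X'}+(g-1)\nu^*E$ together with $\nu^*E=\tilde E+F'$, the divisor $A$ simplifies dramatically to $K_{X''/X'}-\pi^{-1}D-F'$; since $F\not\subset\mathrm{Supp}(D)$, $\tilde E$ is not a component of $\pi^{-1}D$, and adjunction $(K_{X''/X'}-F')|_{\tilde E}=K_{\tilde E/E}$ gives $\lceil A\rceil|_{\tilde E}=K_{\tilde E/E}+\lceil-\pi^*D|_{\tilde E}\rceil$. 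A direct calculation in blow-up coordinates, using $\mathrm{mult}_F D=0$, shows $\mu^*D|_E=\sigma^*(D|_F)$, where $\sigma:E=\mathbb P(N^*_{F/X})\to F$ is the projective bundle, so $\pi^*D|_{\tilde E}=(\nu|_{\tilde E})^*\sigma^*(D|_F)$. Choosing $\nu$ so that $\nu|_{\tilde E}$ is a log resolution of $(E,\sigma^*(D|_F))$, local vanishing on $\nu|_{\tilde E}$ identifies $(\nu|_{\tilde E})_*\mathcal O_{\tilde E}(\lceil A\rceil|_{\tilde E})$ with $\mathcal J(E,\sigma^*(D|_F))$. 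Since $\sigma$ is smooth, multiplier ideals commute with pullback, so $\mathcal J(E,\sigma^*(D|_F))=\sigma^*\mathcal J(F,D|_F)$; combined with $\sigma_*\mathcal O_E=\mathcal O_F$ and the projection formula, this yields $\pi_*\mathcal O_{\tilde E}(\lceil A\rceil|_{\tilde E})=\mathcal J(F,D|_F)$.

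The main obstacle is this last identification. Because $\pi|_{\tilde E}$ has $(g-1)$-dimensional fibers, $\mathcal J(F,D|_F)$ cannot be read off directly as a birational pushforward from $\tilde E$; one must route through the intermediate $E$ and combine local vanishing on the birational part $\nu|_{\tilde E}$ with the invariance of multiplier ideals under the smooth morphism $\sigma$ and the triviality $\sigma_*\mathcal O_E=\mathcal O_F$. A subtler prerequisite is the identity $\mu^*D|_E=\sigma^*(D|_F)$, which uses the hypothesis $F\not\subset\mathrm{Supp}(D)$ in an essential way, since otherwise $\mu^*D$ would acquire an exceptional contribution along $E$ and the clean restriction formula would fail.
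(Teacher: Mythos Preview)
The paper does not supply its own proof of this statement; it simply quotes the result from \cite[5.1]{Eisenstein10}. Your argument is correct and is essentially the standard one (and the one in Eisenstein's paper): set up the structure sequence of $\tilde E$ on the resolution, twist so that the kernel and middle term push forward to $\mathcal J(X,gF+D)$ and ${\rm adj}_F(X,D)$, kill $R^1\pi_*$ of the kernel by local vanishing, and identify the cokernel via the factorization $\tilde E\to E\to F$ using adjunction, the identity $\mu^*D|_E=\sigma^*(D|_F)$ (valid because $\mathrm{mult}_F D=0$), smooth pullback of multiplier ideals along $\sigma$, and $\sigma_*\mathcal O_E=\mathcal O_F$. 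The divisor computation $A=K_{X''/X'}-\pi^*D-F'$ and the adjunction $(K_{X''/X'}-F')|_{\tilde E}=K_{\tilde E/E}$ are both correct.

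One small wording issue: in the last step you write ``local vanishing on $\nu|_{\tilde E}$ identifies $(\nu|_{\tilde E})_*\mathcal O_{\tilde E}(\lceil A\rceil|_{\tilde E})$ with $\mathcal J(E,\sigma^*(D|_F))$,'' but what you actually use there is just the \emph{definition} of the multiplier ideal as a birational pushforward, once $\nu$ is chosen so that $\nu|_{\tilde E}$ is a log resolution of $(E,\sigma^*(D|_F))$ (which one may arrange, and the adjoint ideal is independent of the resolution). Local vanishing is only needed earlier, for $R^1\pi_*\mathcal O_{X''}(\lceil B\rceil)=0$. Also, to pass from $\lceil A\rceil|_{\tilde E}$ to $K_{\tilde E/E}+\lceil-\pi^*D|_{\tilde E}\rceil$ you implicitly use that restriction to $\tilde E$ commutes with rounding; this holds because the total transform has SNC support and $\tilde E$ is one of its components, which is worth stating.
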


Assume now that $a:X\to A$ is a morphism of smooth projective varieties   $Z\subset A$ is a finite union of closed points with corresponding ideal ${\rm \bf m} _Z \subset \mathcal O _A$ and
that $F=a^{-1}(Z)\subset X$ is a union of fibers.

\begin{corollary}\label{c-basic} Let $H$ and $L$ be Cartier divisors on $A$ and $D_0$ be a Cartier divisor on $X$ such that
\begin{enumerate}
\item $H$ is very ample and $\mathcal O_A(gH)\otimes {\rm \bf m} _Z $ is generated,
\item $D\sim _{\mathbb Q}D_0+\epsilon a^*H$ for some $\epsilon \geq 0$, and
\item $L-(g+\epsilon)H$ is ample (resp. $L-(2g+\epsilon)H$ is ample),
\end{enumerate} then $$H^i(A,a_*(\omega _X(D_0+a^*L)\otimes {\rm adj}_F(X,D)))=0 \qquad \forall i>0$$
(resp. $a_*(\omega _X(D_0+a^*L)\otimes {\rm adj}_F(X,D))$ is generated).
\end{corollary}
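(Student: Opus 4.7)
My plan is to rewrite $\mathcal F:=a_*(\omega_X(D_0+a^*L)\ot\mathrm{adj}_F(X,D))$ via the log resolution $\pi:X''\to X$, apply a Koll\'ar-type vanishing to the composition $P:=a\circ\pi:X''\to A$, and then deduce the generation statement by Castelnuovo--Mumford regularity with respect to the very ample divisor $H$.

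For the vanishing under hypothesis (3), the projection formula together with the definition of the adjoint ideal yield
$$\mathcal F=P_*\OO_{X''}\bigl(K_{X''}+\lceil B\rceil\bigr),\qquad B:=\pi^{-1}(D_0+a^*L)-\pi^{-1}D-g\pi^{-1}F+\nu^{-1}_*E.$$
Using (2) and writing $L=A_1+(g+\epsilon)H$ with $A_1:=L-(g+\epsilon)H$ ample, one has
$$B\sim_\Q P^*A_1+\bigl(P^*(gH)-g\pi^{-1}F\bigr)+\nu^{-1}_*E.$$
A log discrepancy calculation on the log smooth pair $(X',E)$ (using $K_{X'/X}=(g-1)E$) produces the effective $\pi$-exceptional divisor
$$\Theta:=K_{X''/X}-g\pi^{-1}F+\nu^{-1}_*E=K_{X''/X'}-\nu^*E+\nu^{-1}_*E\geq 0.$$
Meanwhile, hypothesis (1) combined with $a^{-1}\mathbf{m}_Z\cdot\OO_X=I_F$ (since $F=a^{-1}(Z)$ scheme-theoretically) and $I_F\cdot\OO_{X''}=\OO_{X''}(-\pi^{-1}F)$ implies that $P^*(gH)-\pi^{-1}F$ is a globally generated, hence nef, line bundle on $X''$. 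Consequently $\lceil B\rceil$ is the round-up of a $\Q$-divisor differing from $P^*A_1+(P^*(gH)-\pi^{-1}F)$ by an effective $\pi$-exceptional correction plus an snc boundary with coefficients in $[0,1)$. Applying relative Kawamata--Viehweg vanishing along $\pi$ to clear the exceptional contributions, then Koll\'ar's vanishing theorem for $P$ against the ample $A_1$ on $A$, yields $H^i(A,\mathcal F)=0$ for all $i>0$.

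For the global generation statement under the stronger hypothesis $L-(2g+\epsilon)H$ ample, I apply Castelnuovo--Mumford regularity: since $H$ is very ample and $\dim A=g$, it suffices to show $H^i(A,\mathcal F\ot\OO_A(-iH))=0$ for $1\leq i\leq g$. By the projection formula,
$$\mathcal F\ot\OO_A(-iH)=a_*\bigl(\omega_X(D_0+a^*(L-iH))\ot\mathrm{adj}_F(X,D)\bigr),$$
and the vanishing proved above, applied with $L$ replaced by $L-iH$, gives the conclusion provided $L-iH-(g+\epsilon)H=L-(g+\epsilon+i)H$ is ample; under $L-(2g+\epsilon)H$ ample this holds for all $1\leq i\leq g$, so Castelnuovo--Mumford delivers global generation of $\mathcal F$.

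The main obstacle is the first step: controlling the exceptional correction $-g\pi^{-1}F+\nu^{-1}_*E$ so that a standard Koll\'ar-type vanishing applies. The identity $\Theta\geq 0$ converts the apparently substantial negative contribution $-g\pi^{-1}F$ into an effective divisor absorbed into the discrepancy of $\pi$; without the adjoint modification $\nu^{-1}_*E$ (i.e., for the ordinary multiplier ideal $\mathcal J(X,gF+D)$) this effectivity would fail, explaining the necessity of working with the adjoint ideal.
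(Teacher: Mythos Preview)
Your Castelnuovo--Mumford argument for the generation statement is fine and matches the paper. The gap is in the vanishing step: the decomposition you claim for $\lceil B\rceil$ does not hold. You correctly observe that $\Theta:=K_{X''/X}-g\pi^{-1}F+\nu^{-1}_*E\geq 0$, but $K_{X''/X}$ is already absorbed into $K_{X''}$ when you write $\mathcal F=P_*\OO_{X''}(K_{X''}+\lceil B\rceil)$; it is \emph{not} available inside $B$ to compensate for the $-g\pi^{-1}F$. Concretely,
\[
B-\bigl(P^*A_1+(P^*(gH)-\pi^{-1}F)\bigr)\ \sim_{\Q}\ -(g-1)\pi^{-1}F+\nu^{-1}_*E\ =\ (2-g)\,\nu^{-1}_*E-(g-1)\sum b_iE_i,
\]
which is \emph{not} effective once $g\geq 2$. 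So $\lceil B\rceil$ is not of the form ``klt boundary plus $P^*(\text{ample})$ plus effective exceptional'', and Koll\'ar vanishing does not apply directly to the adjoint ideal as you propose. Your final paragraph in fact has the logic reversed: the extra $+\nu^{-1}_*E$ does not rescue effectivity here, and it is the \emph{multiplier} ideal $\mathcal J(X,gF+D)$, not the adjoint ideal, for which one can set up a clean Koll\'ar-type vanishing.

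The paper's route is exactly this: choose a general $\Q$-divisor $G\sim_{\Q}gH$ vanishing to order $g$ along $Z$ (this is where hypothesis~(1) is used), so that $\lfloor\pi^*a^*G\rfloor=g\nu^*E$ and hence $\mathcal J(X,gF+D)=\pi_*\OO_{X''}(K_{X''/X}-\lfloor\pi^*(D+a^*G)\rfloor)$ is an honest multiplier ideal of a $\Q$-divisor on $X$. Now $\pi^*(D_0+a^*L)-\lfloor\pi^*(D+a^*G)\rfloor\sim_{\Q}\{\pi^*(D+a^*G)\}+P^*(L-(g+\epsilon)H)$ is klt plus the pullback of an ample, so Koll\'ar vanishing gives $H^i(A,a_*(\omega_X(D_0+a^*L)\otimes\mathcal J(X,gF+D)))=0$. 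One then passes to the adjoint ideal via the short exact sequence
\[
0\to\mathcal J(X,gF+D)\to\mathrm{adj}_F(X,D)\to\mathcal J(F,D|_F)\to 0,
\]
using that $a_*(\omega_F(D_0|_F)\otimes\mathcal J(F,D|_F))$ is supported on the finite set $Z$ and so has no higher cohomology. This two-step structure---vanishing for the multiplier ideal via an auxiliary $G$, then the exact sequence---is the missing idea in your argument.
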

\begin{proof}
Let $\mu :X'\to X$ be the blow up of $F$ and let $E$ be the exceptional divisor, then (cf. \cite[9.2.33]{Lazarsfeld04}) $$\mathcal J (X,gF+D)=\mu _* \left( \mathcal J (X',gE+\mu ^* D)\otimes \mathcal O _{X'}(K_{X'/X})\right).$$
Let $G\sim _{\mathbb Q}gH$ be a general $\mathbb Q$-divisor vanishing along $Z\subset A$ to order $g$.
Let $\nu:X''\to X'$ be a log resolution of $(X',E+\mu ^* D+\mu ^* a^*G)$ and $\pi=\mu \circ \nu$, then $\lfloor {\pi}^* a^*G \rfloor =g\nu^* E$ and
$$\mathcal J (X,gF+D)=\pi_*\mathcal O_{X''}(K_{X''/X}-\lfloor {\pi}^*(D+ a^*G) \rfloor ).$$
Since $${\pi}^*(D_0+a^*L)-\lfloor {\pi}^*(D+a^*G)\rfloor \sim _{\mathbb Q} \{{\pi}^*(D+ a^*G) \}+{\pi}^*a^*(L-(g+\epsilon)H),$$ it follows that by Koll\'ar vanishing (cf. \cite[10.15]{Kollar95}) that
$$R^i\pi_*\mathcal O_{X''}(K_{X''/X}+{\pi}^*D_0-\lfloor {\pi}^*(D+ a^*G) \rfloor ) =0\qquad \forall i>0,$$
$$R^ia_*(\omega _X(D_0)\otimes \mathcal J (X,gF+D))=R^i(a\circ  \pi )_*\mathcal O_{X''}(K_{X''}+{\pi}^*D_0-\lfloor {\pi}^*(D+ a^*G) \rfloor ) $$ is torsion free for all $i$
and $$H^i(A,a_* (\omega _X(D_0+a^*L)\otimes \mathcal J (X,gF+D)))=$$
$$H^i(A, (a\circ  \pi)_*\mathcal O_{X''}(K_{X''}+{\pi}^*(D_0+a^*L)-\lfloor {\pi}^*(D+ a^*G) \rfloor ))=0$$ for all $i>0$.
Since $a_*\left( \omega _F(D_0|_F)\otimes \mathcal J(F,D|_F) \right)$ is supported on the finite subset $Z\subset A$, it follows that we have a short exact sequence
  \begin{multline*}0\to a_* \left( \omega _X(D_0+a^*L)\otimes \mathcal J(X,gF+D)\right) \\
\to a_* \left( \omega _X(D_0+a^*L)\otimes {\rm adj}_F(X,D) \right) \\
 \to a_*\left( \omega _F(D_0|_F)\otimes \mathcal J(F,D|_F) \right) \to 0.\end{multline*}
We also have $$H^i(A,a_*\left( \omega _F(D_0|_F)\otimes \mathcal
J(F,D|_F) \right))=0\qquad \forall i>0$$ and hence $H^i(A,a_*\left(
\omega _X(D_0+a^*L)\otimes {\rm adj}_F(X,D) \right) )=0$ for $i>0$
as required.

The remaining statement is a standard consequence of Castelnuovo-Mumford regularity (see eg. \cite[1.8.3]{Lazarsfeld04}).
\end{proof}

From now on we assume that $F\subset X$ is a general fiber of a morphism $a:X\to A$ from a smooth projective variety $X$ to an abelian variety $A$. We may choose the origin $0\in A$
so that $F=X_0$ is the fiber over $0$.

If $V$ is a linear series such that $F$ is not contained
in ${\rm Bs} (V)$ and $D\in V$ is general, we define the ideal
$${\rm adj}_F(X,c\cdot V)={\rm adj}_F(X,c\cdot D).$$

\begin{lemma}\label{gen}
With the above notation, we have $${\rm adj}_F(X,c\cdot V) \supset {\rm
adj}_F(X,c\cdot D'),$$ for any $D' \in V$ such that $F\not \subset {\rm Supp}(D')$.
\end{lemma}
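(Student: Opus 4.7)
The plan is to pass to a single log resolution on which a general $D\in V$ and the given $D'\in V$ can be compared coefficient by coefficient, using the standard principle that a general member of a base--point--free linear system does not vanish along any prescribed divisor. First I would fix a general $D\in V$ witnessing ${\rm adj}_F(X,c\cdot V)={\rm adj}_F(X,c\cdot D)$. Let $\mu:X'\to X$ be the blowup of $F$ with reduced exceptional divisor $E$, and choose $\nu:X''\to X'$ to be a log resolution of $(X',\mu^{-1}F+\mu^{-1}D+\mu^{-1}D')$ that also principalises the base ideal of $\mu^{*}V$; set $\pi=\mu\circ\nu$. On $X''$ one then decomposes $\pi^{*}V=B_0+|M|$, where $B_0$ is the divisorial fixed part and $|M|$ is base--point--free. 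Since adjoint ideals are independent of the choice of log resolution, this single $\pi$ may be used to compute both ${\rm adj}_F(X,c\cdot D)$ and ${\rm adj}_F(X,c\cdot D')$.

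The next step is the divisorial comparison. For any $\tilde D\in V$ with $F\not\subset{\rm Supp}(\tilde D)$ one has $\pi^{*}\tilde D=B_0+M_{\tilde D}$ for some $M_{\tilde D}\in|M|$. Because $|M|$ is base--point--free and only finitely many prime divisors on $X''$ are relevant, a sufficiently general $D\in V$ satisfies ${\rm ord}_{E_i}(M_D)=0$ for every such prime divisor $E_i\subset X''$. Hence
$${\rm ord}_{E_i}(\pi^{*}D)={\rm ord}_{E_i}(B_0)\le{\rm ord}_{E_i}(\pi^{*}D'),$$
and multiplying by $c\ge 0$ yields $c\,\pi^{*}D\le c\,\pi^{*}D'$ as $\Q$-divisors on $X''$.

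To conclude, the remaining data in the definition of ${\rm adj}_F$, namely $K_{X''/X}$, $g\cdot\pi^{-1}F$ and $\nu^{-1}_{*}E$, depend only on $\pi$ and not on $D$ or $D'$. The previous inequality therefore upgrades to a coefficient--wise inequality of round--up divisors
$$\lceil K_{X''/X}-c\,\pi^{*}D-g\cdot\pi^{-1}F+\nu^{-1}_{*}E\rceil\ge \lceil K_{X''/X}-c\,\pi^{*}D'-g\cdot\pi^{-1}F+\nu^{-1}_{*}E\rceil,$$
and applying $\pi_{*}$ gives the desired inclusion ${\rm adj}_F(X,c\cdot V)={\rm adj}_F(X,c\cdot D)\supset {\rm adj}_F(X,c\cdot D')$. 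The only point that really needs care is the existence of a common $\pi$ simultaneously resolving $F$, $D$, $D'$ and the base ideal of $V$; this is routine, and I do not anticipate any serious obstacle beyond that bookkeeping.
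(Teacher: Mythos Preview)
Your argument is correct and is exactly the standard computation the paper has in mind; indeed the paper's own proof reads only ``Standard.'' One small point of order worth tightening: you first fix a general $D$ and then build $\pi$ as a log resolution of $(X',\mu^{-1}F+\mu^{-1}D+\mu^{-1}D')$, but you later invoke generality of $D$ relative to the finitely many prime divisors appearing on $X''$, which is circular since those divisors depend on $D$. The clean fix is to construct $\pi$ by resolving only $(X',\mu^{-1}F+\mu^{-1}D')$ and principalising the base ideal of $\mu^*V$; then by Bertini a general $D\in V$ has $M_D\in|M|$ smooth and transverse to the exceptional locus and to $\nu^{-1}_*E$, so $\pi$ is automatically a log resolution for $D$ as well, and your coefficient comparison $c\,\pi^*D\le c\,\pi^*D'$ proceeds exactly as written.
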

\begin{proof} Standard.\end{proof}
If $V_i\subset |iL|$ is a sequence of linear series such that $V_i\cdot V_j\subset V_{i+j}$ and $F\not \subset {\bf B}(V_\bullet )$, then we let
 $${\rm adj}_F(X,c\cdot ||V_\bullet ||)=\bigcup _{i>0}{\rm adj}_F(X,\frac ci\cdot D_i)$$
where $D_i\in V_i$ is general. Recall that as $X$ is Noetherian, we have ${\rm
adj}_F(X,c\cdot ||V_\bullet ||)= {\rm adj}_F(X,\frac ci\cdot D_i)$ for
all $i >0$ sufficiently divisible.

Fix $m>0$ and $D_F\in |mK_F|$. By a result of Viehweg (cf. \cite{Viehweg83}, \cite{Viehweg95} and \cite{Kollar95}), for any rational number $\epsilon
>0$, the map
$$|t(mK_X+\epsilon a^*H)|\to |tmK_F|$$ is surjective for all $t>0$ sufficiently divisible. Let $$V_{t,\epsilon ,D_F}=\{G\in
|t(mK_X+\epsilon a^*H)|\ {\rm s.t.}\ G|_F=tD_F \}.$$ Note that
$V_{i,\epsilon ,D_F}\cdot V_{j,\epsilon ,D_F}\subset V_{i+j,\epsilon
,D_F}$ and so we define the ideal $${\rm adj}_F(X,||V_{\bullet
,\epsilon ,D_F}||):=\bigcup _{t>0}{\rm adj}_F(X,\frac 1t\cdot V_{t,\epsilon
,D_F}).$$

\subsection{Descending chains of adjoint ideals}

\begin{lemma}
For any $0<\epsilon '<\epsilon$ we have
$${\rm adj}_F(X,||V_{\bullet ,\epsilon ' ,D_F}||)\subset {\rm adj}_F(X,||V_{\bullet ,\epsilon ,D_F}||).$$
\end{lemma}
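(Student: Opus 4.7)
The plan is to establish the inclusion at a fixed sufficiently divisible level of $t$ and then pass to the asymptotic union. Choose $t>0$ divisible enough that $t(\epsilon-\epsilon')$ is a positive integer, $|t(\epsilon-\epsilon')H|$ is base point free on $A$, and both ${\rm adj}_F(X,||V_{\bullet,\epsilon',D_F}||)$ and ${\rm adj}_F(X,||V_{\bullet,\epsilon,D_F}||)$ have already stabilised at step $t$, so each equals ${\rm adj}_F(X,\tfrac{1}{t}\cdot)$ of the corresponding linear system. It then suffices, for a general $G\in V_{t,\epsilon',D_F}$, to exhibit an element of $V_{t,\epsilon,D_F}$ whose adjoint ideal contains ${\rm adj}_F(X, \tfrac{1}{t}G)$.

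Pick such a $G$ and a general $B\in |t(\epsilon-\epsilon')H|$ with $0\notin {\rm Supp}(B)$. Since $a^*B\cap F=\emptyset$ we have $(G+a^*B)|_F=G|_F=tD_F$ and $G+a^*B\sim t(mK_X+\epsilon a^*H)$, so $G+a^*B\in V_{t,\epsilon,D_F}$. Lemma \ref{gen} then gives
$${\rm adj}_F(X, \tfrac{1}{t}\cdot V_{t,\epsilon,D_F})\supset {\rm adj}_F(X, \tfrac{1}{t}(G+a^*B)),$$
so the lemma reduces to the equality ${\rm adj}_F(X, \tfrac{1}{t}(G+a^*B))={\rm adj}_F(X, \tfrac{1}{t}G)$.

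I would verify this equality on the log resolution $\pi:X''\to X$ used to compute ${\rm adj}_F(X, \tfrac{1}{t}G)$. The pulled back series $(a\circ\pi)^*|t(\epsilon-\epsilon')H|$ on $X''$ is base point free, so Bertini applied on $X''$ shows that for $B$ generic the divisor $(a\circ\pi)^*B$ is smooth, shares no irreducible component with the SNC divisor $\pi^*G+\pi^{-1}F+\nu^{-1}_*E+{\rm Exc}(\pi)$, and crosses it transversely; and $(a\circ\pi)^*B$ avoids $\pi^{-1}F$ because $0\notin {\rm Supp}(B)$. Consequently $\pi$ is also a log resolution of $(X,G+a^*B+F)$, and the only change in $\lceil K_{X''/X}-\tfrac{1}{t}\pi^*(G+a^*B)-g\pi^{-1}F+\nu^{-1}_*E\rceil$ relative to the corresponding expression for ${\rm adj}_F(X, \tfrac{1}{t}G)$ is the extra term $-\tfrac{1}{t}(a\circ\pi)^*B$. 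This term is supported on components on which every other term vanishes, and its coefficient there lies in $(-1,0)$; the round up therefore contributes $0$, and the two adjoint ideals agree. The main obstacle is this Bertini step on $X''$ rather than on $A$: one needs $(a\circ\pi)^*B$ to be transverse to all relevant exceptional and strict transform components, not just generic downstairs. Once that is secured, the comparison of the round ups is component by component and formal, and the passage from fixed $t$ to the asymptotic union is automatic.
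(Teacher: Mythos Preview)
Your argument is correct and follows essentially the same route as the paper: stabilise at a sufficiently divisible $t$, add a general member of $|t(\epsilon-\epsilon')a^*H|$ to a general $G\in V_{t,\epsilon',D_F}$ to land in $V_{t,\epsilon,D_F}$, and invoke Lemma~\ref{gen}. The paper simply asserts the equality ${\rm adj}_F(X,\tfrac{1}{t}G)={\rm adj}_F(X,\tfrac{1}{t}(G+G'))$ for general $G'$ as a standard fact, whereas you spell out the underlying Bertini argument on the log resolution; your concern about applying Bertini on $X''$ is legitimate but routinely handled, since the pulled-back linear series is base-point-free there and one only needs transversality to finitely many fixed divisors.
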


\begin{proof}
We can pick $t>0$ sufficiently divisible such that $${\rm
adj}_F(X,||V_{\bullet ,\epsilon ,D_F}||)={\rm adj}_F(X,\frac 1t\cdot
V_{t,\epsilon ,D_F})\qquad {\rm and} $$ $${\rm adj}_F(X,||V_{\bullet ,\epsilon '
,D_F}||)={\rm adj}_F(X,\frac 1t\cdot V_{t,\epsilon ' ,D_F}).$$
Moreover, we have $$V_{t, \epsilon ', D_F} +|t(\epsilon-\epsilon
') a^*H| \subset V_{t,\epsilon,D_F}.$$

Pick a general $G \in V_{t, \epsilon ', D_F}$ so that ${\rm
adj}_F(X,\frac 1t\cdot V_{t,\epsilon ' ,D_F})={\rm adj}_F(X,\frac
1t\cdot G)$. If $G' \in
|t(\epsilon-\epsilon ')a^* H|$ is general, then ${\rm adj}_F(X,\frac 1t\cdot
G)={\rm adj}_F(X,\frac 1t\cdot (G+G'))$. By Lemma \ref{gen}, we have
$${\rm adj}_F(X,\frac 1t\cdot (G+G')) \subset {\rm adj}_F(X,\frac 1t\cdot
V_{t,\epsilon ,D_F}).$$
\end{proof}
\begin{lemma}\label{l-e} There exists a constant $0< \epsilon_0 \ll 1$ such that $$a_* (\omega _X ^{\otimes m+1}\otimes {\rm adj}_F(X,||V_{\bullet ,\epsilon
,D_F}||))=a_* (\omega _X ^{\otimes m+1}\otimes {\rm
adj}_F(X,||V_{\bullet ,\epsilon_0  ,D_F}||))$$
for all $\epsilon< \epsilon_0$.  Moreover, if we denote this coherent sheaf by $\mathcal F _{||D_F||}$, then \begin{enumerate}
\item $H^i(A,\mathcal F
_{||D_F||}\otimes \mathcal O _A(L))=0$ if $i>0$ and $L-gH$ is ample;
\item $\mathcal F _{||D_F||}\otimes \mathcal O _A(L)$ is generated if
$L-2gH$ is ample.\end{enumerate}\end{lemma}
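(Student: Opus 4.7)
The plan is to establish the stabilization of the direct-image chain first, and then to read off (1) and (2) as direct applications of Corollary \ref{c-basic}. Writing $\mathcal G_\epsilon := a_*(\omega_X^{\otimes m+1}\otimes {\rm adj}_F(X,||V_{\bullet,\epsilon,D_F}||))$, the preceding lemma tells me that $\{\mathcal G_\epsilon\}_{\epsilon>0}$ is a descending chain of coherent subsheaves of $a_*\omega_X^{\otimes m+1}$ as $\epsilon$ decreases to $0$. Stabilization of such a chain does not follow from Noetherianity of $A$ alone, so I will need to cut it off using a global-generation argument fed by Corollary \ref{c-basic} itself.

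For this, I would fix once and for all an auxiliary divisor $L_0$ on $A$ with $L_0-3gH$ ample (for instance $L_0=4gH$). Then for every sufficiently small $\epsilon>0$ the divisor $L_0-(2g+\epsilon)H$ is ample, and Corollary \ref{c-basic} guarantees that $\mathcal G_\epsilon \otimes \mathcal O_A(L_0)$ is globally generated on $A$. By the projection formula, the global sections $H^0(A,\mathcal G_\epsilon \otimes \mathcal O_A(L_0))$ form a descending chain of vector subspaces of the finite-dimensional ambient space $H^0(X,\omega_X^{\otimes m+1}\otimes a^*\mathcal O_A(L_0))$. Since their dimensions are monotone non-negative integers, the chain stabilizes below some threshold $\epsilon_0>0$. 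Global generation then forces $\mathcal G_\epsilon \otimes \mathcal O_A(L_0)$ itself to be independent of $\epsilon<\epsilon_0$, as it is the image of the evaluation map from the now-fixed section space; untwisting by $\mathcal O_A(-L_0)$ yields the claimed stabilization $\mathcal G_\epsilon = \mathcal G_{\epsilon_0}$ for all $\epsilon<\epsilon_0$. I then set $\mathcal F_{||D_F||}:=\mathcal G_{\epsilon_0}$.

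With $\mathcal F_{||D_F||}$ in hand, statements (1) and (2) should be immediate. Given any $L$ satisfying the ampleness hypothesis of the respective statement, openness of the ample cone lets me pick $\epsilon<\epsilon_0$ small enough that $L-(g+\epsilon)H$ (respectively $L-(2g+\epsilon)H$) is still ample. I then apply Corollary \ref{c-basic} to $D=\tfrac{1}{t}G$ for a general $G\in V_{t,\epsilon,D_F}$ with $t$ sufficiently divisible so that ${\rm adj}_F(X,\tfrac{1}{t}\cdot V_{t,\epsilon,D_F})={\rm adj}_F(X,||V_{\bullet,\epsilon,D_F}||)$. This yields the desired vanishing and global generation for $\mathcal F_{||D_F||}\otimes\mathcal O_A(L)$.

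The main difficulty is the stabilization step: the naive descending-chain argument does not terminate on a Noetherian scheme, and the cleanest way out is a bootstrap through Corollary \ref{c-basic} in which the same regularity statement later used to prove (1) and (2) must first be invoked with a sufficiently positive auxiliary line bundle, so as to reduce the coherent-sheaf question to a finite-dimensional linear-algebra one.
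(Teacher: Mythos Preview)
Your proposal is correct and follows essentially the same route as the paper: fix an auxiliary $L_0$ with $L_0-2gH$ ample, invoke Corollary~\ref{c-basic} so that each $\mathcal G_\epsilon\otimes\mathcal O_A(L_0)$ is globally generated, use finite-dimensionality of the ambient $H^0$ to stabilize the section spaces, and then let global generation force the sheaves themselves to coincide; (1) and (2) are then read off from Corollary~\ref{c-basic} after shrinking $\epsilon$. The paper phrases the key step contrapositively (a strict inclusion of sheaves would force a strict drop in $h^0$), but the content is identical.
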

\begin{proof} By \eqref{c-basic}, one sees that if $L-2gH$ is ample, then there exists a rational number $0<\epsilon _0\ll 1 $ such that $a_* (\omega _X ^{\otimes m+1}\otimes {\rm adj}_F(X,||V_{\bullet ,\epsilon
,D_F}||))\otimes \mathcal O _A(L)$ is generated for all $0<\epsilon \leq \epsilon _0$.
Therefore, if the inclusion $$a_* (\omega _X ^{\otimes m+1}\otimes {\rm adj}_F(X,||V_{\bullet ,\epsilon '
,D_F}||))\subset a_* (\omega _X ^{\otimes m+1}\otimes {\rm adj}_F(X,||V_{\bullet ,\epsilon
,D_F}||))$$ is strict for some $0<\epsilon ' <\epsilon \leq \epsilon _0$, then
\begin{multline*}h^0(a_* (\omega _X ^{\otimes m+1}\otimes {\rm adj}_F(X,||V_{\bullet ,\epsilon '
,D_F}||))\otimes \mathcal O _A(L))\\
<h^0( a_* (\omega _X ^{\otimes m+1}\otimes {\rm adj}_F(X,||V_{\bullet ,\epsilon
,D_F}||))\otimes \mathcal O _A(L)).\end{multline*}
Since $H^0(A,a_* (\omega _X ^{\otimes m+1}\otimes {\rm adj}_F(X,||V_{\bullet ,\epsilon
_0 ,D_F}||))\otimes \mathcal O _A(L))$ is finite dimensional, we may assume that the above inclusions are equalities for
any $0<\epsilon '< \epsilon \leq \epsilon _0\ll 1$.

The remaining statements then follow
from \eqref{c-basic}.
\end{proof}
\begin{lemma}\label{l-inv} The sheaf $\mathcal F_{||D_F||}$ is independent of $H$.\end{lemma}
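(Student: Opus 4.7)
The plan is to show that if $H$ and $H'$ are two very ample divisors on $A$, each satisfying Corollary \ref{c-basic}(1), then the sheaves built from $H$ and $H'$ coincide; denote them temporarily by $\mathcal F^H_{||D_F||}$ and $\mathcal F^{H'}_{||D_F||}$. First I would reduce to the case where $H'-H$ is itself very ample, by comparing $H_1$ and $H_2$ to the common divisor $H_1+H_2$; this sum also satisfies Corollary \ref{c-basic}(1), since global generation of $\mathcal O_A(gH_1)\otimes \mathbf{m}_Z$ is preserved upon tensoring with the globally generated sheaf $\mathcal O_A(gH_2)$.

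The heart of the argument is the following comparison: whenever $\epsilon H' - \eta H$ is an ample $\Q$-divisor on $A$ and $t>0$ is sufficiently divisible, then as subsets
\[
V_{t,\eta,D_F}^{H} + |t a^*(\epsilon H' - \eta H)| \;\subset\; V_{t,\epsilon,D_F}^{H'},
\]
because the divisor classes match and a generic $G'=a^*\tilde G'$ in the pullback system is the inverse image of a generic $\tilde G'\in|t(\epsilon H'-\eta H)|$, which avoids $0\in A$, so $G'$ is disjoint from $F$ and adding $G'$ to a general $G\in V_{t,\eta,D_F}^{H}$ preserves the restriction $G|_F=tD_F$. Combined with Lemma \ref{gen} and the standard fact that the contribution of a generic $\frac{1}{t}G'$ (with $G'$ smooth and disjoint from $F$) to ${\rm adj}_F$ is trivial once $t>1$, since the associated coefficient $-1/t$ in the rounded divisor of the defining formula becomes $\lceil -1/t\rceil=0$, this yields
\[
{\rm adj}_F(X, ||V_{\bullet,\eta,D_F}^{H}||) \;\subset\; {\rm adj}_F(X, ||V_{\bullet,\epsilon,D_F}^{H'}||).
\]

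Next I would apply this in both directions with $H'=H+H''$ for $H''$ very ample. For the inclusion $\mathcal F^{H}_{||D_F||}\subset \mathcal F^{H+H''}_{||D_F||}$, any $\eta<\epsilon$ suffices since $\epsilon(H+H'')-\eta H=(\epsilon-\eta)H+\epsilon H''$ is automatically ample. For the reverse inclusion, one needs $(\epsilon-\eta)H-\eta H''$ ample; ampleness of $H$ provides some $N$ with $NH-H''$ ample, so any $\eta\leq\epsilon/(N+1)$ works. Pushing forward by $\omega_X^{\otimes m+1}$ and shrinking $\eta,\epsilon$ below the thresholds where Lemma \ref{l-e} guarantees stabilization of the pushforward, both chains of inclusions promote to equalities of the corresponding $\mathcal F_{||D_F||}$ sheaves.

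The main obstacle is justifying that a generic $\frac{1}{t}G'$ truly does not perturb ${\rm adj}_F$: this requires a common log resolution $\pi:X''\to X$ simultaneously resolving $F$, a general $G\in V_{t,\eta,D_F}^{H}$, and the auxiliary $G'$, after which one uses that the strict transform of $G'$ (smooth, disjoint from the other components, with coefficient $-1/t$) contributes $0$ to the rounded divisor $\lceil K_{X''/X}-\pi^{-1}D-g\,\pi^{-1}F+\nu^{-1}_*E\rceil$ for $t>1$. The remainder is bookkeeping to choose $\eta\ll\epsilon\ll 1$ meeting simultaneously the ampleness requirements and the stabilization thresholds of Lemma \ref{l-e}.
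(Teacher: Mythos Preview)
Your argument is correct and is precisely the ``standard'' elaboration the paper has in mind: the authors' proof consists of the single word \emph{Standard}, and your proposal follows the template of the immediately preceding Lemma~3.5, replacing the comparison $V_{t,\epsilon',D_F}+|t(\epsilon-\epsilon')a^*H|\subset V_{t,\epsilon,D_F}$ by the analogous one between linear series built from two different ample classes. The reduction to $H'=H+H''$ and the bookkeeping with thresholds are exactly what is needed to make the two-sided inclusion go through.
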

\begin{proof} Standard.\end{proof}

{\bf Convention.} Fixed $H$ and $D_F$ as above, we may
pick $0< \epsilon \ll 1$, $t=t(\epsilon) \gg 0$ and $D \in |t(mK_X+a^*\epsilon H)|$
such that $\mathcal F_{||D_F||}$ {\bf is computed by} $(\epsilon, t,
D)$, that is $$\mathcal F_{||D_F||}=a_* (\omega _X ^{\otimes
m+1}\otimes {\rm adj}_F(X,\frac{1}{t}\cdot D )).$$

\subsection{Inductive limits}
Let $\pi _k :A_k\to A$ be multiplication by $2^k$ so that $\pi
_k=\pi_{k-1}\circ \pi _1$ and $A_k\cong A$. We let $X_k=X \times
_AA_k$ and $\pi_{X,k}:X_k\to X$, $a_k:X_k\to A_k$. We have a
sequence of inclusions of $\mathcal O _A$ modules
$$  \mathcal O _A\subset {\pi _1}_* \mathcal O _{A_1}\subset {\pi _2}_* \mathcal O _{A_2}\cdots \subset  \mathcal O _{\infty}:=\bigcup _{i\geq 0}{\pi _i}_* \mathcal O _{A_i}.$$
Then $\mathcal O _{\infty}$ is the direct sum of all topological
trivial line bundles $P\in {\rm Pic}^0(A)$ such that $\pi _k
^*P=\mathcal O _{A_k}$ for some $k>0$. We may think of $A_\infty$
as the inverse limit of the system $\ldots A_k\to A_{k-1}\to
\ldots$.

We have the following diagram
$$\begin{CD}
X_k @>{\pi_{X,k}}>> X \\
@VV{a_k}V @VV{a}V \\
A_k @>{\pi_k}>> A \\
\end{CD}$$

Fix once and for all a very ample divisor $H$ on $A$, and let $H_k$ be the
corresponding divisor on $A_k$ so that $\pi _k^*H\equiv 2^{2k}H_k$ (see for example  \cite[Section 6, Corollary 3]{Mumford70}). Let $\mathcal F^k_{||D_F||}\in {\rm
Coh}(A_k)$ be the sheaves defined analogously to $\mathcal F_{||D_F||}$ (cf. \eqref{l-e}) so that
$$\mathcal F^k_{||D_F||}={a_k}_*(\omega _{X_k}^{\otimes m+1}\otimes {\rm adj}_{F_k}(X_k,\frac 1 t \cdot D_k))$$ where $F_k=a_k^{-1}(0)\cong F$, and $D_k$ is a general member of $V^k_{t,\epsilon,D_F}:=|t(mK_{X_k}+a_k^*\epsilon H_k)|$ such that $D_k|_{F_k}=tD_F$, $0<\epsilon \ll 1$ and $t\gg 0$.

Let $\mathcal
V^k_{||D_F||}$ be the coherent sheaves on $A_k$ defined using
$\mathcal J (X_k,||V_{\bullet , \epsilon , D_F}||)$ instead of ${\rm
adj}_{F_k}(X_k,||V_{\bullet , \epsilon , D_F}||)$ so that $$\mathcal
V^k_{||D_F||}={a_k}_*(\omega _{X_k}^{\otimes m+1}\otimes \mathcal
J(X_k,||V^k_{\bullet , \epsilon , D_F}||)).$$

We now define coherent sheaves on $A$
$$\mathcal G^k_{||D_F||}:={\pi _k}_*\mathcal F^k_{||D_F||}, \quad \mathcal W^k_{||D_F||}:={\pi _k}_*\mathcal V^k_{||D_F||}.
$$
Since ${\rm adj}_{F_k}(X_k,||V^k_{\bullet , \epsilon , D_F}||)\subset \mathcal J (X_k,||V^k_{\bullet , \epsilon , D_F}||)$, we have $\mathcal G^k_{||D_F||} \subset \mathcal
W^k_{||D_F||}$ for all $k>0$.
\begin{lemma}\label{l-in} For all
$k>0$ we have inclusions
$$ \mathcal G^k_{||D_F||}\subset \mathcal G^{k+1}_{||D_F||}\qquad  {\rm and}\qquad  \mathcal W^k_{||D_F||}\subset \mathcal W^{k+1}_{||D_F||}\subset \mathcal V _{||D_F||}\otimes {\pi _{k+1}}_* \mathcal O _{A_{k+1}}.$$
\end{lemma}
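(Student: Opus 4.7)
The plan is to reduce all three inclusions to ideal-sheaf computations on the \'etale covers $X_{k+1}\to X_k$, combining \'etale base change for adjoint and multiplier ideals with Lemma \ref{gen} and the stability established in Lemma \ref{l-e}. I would treat the two telescoping inclusions $\mathcal G^k\subset\mathcal G^{k+1}$ and $\mathcal W^k\subset\mathcal W^{k+1}$ in parallel. Write $\sigma_k\colon X_{k+1}\to X_k$ and $\sigma'_k\colon A_{k+1}\to A_k$ for the one-step \'etale covers, so that $\pi_{k+1}=\pi_k\circ\sigma'_k$; from $\pi_j^*H\equiv 2^{2j}H_j$ for $j\in\{k,k+1\}$ one finds ${\sigma'_k}^*H_k\equiv 4H_{k+1}$. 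Pick $(\epsilon,t,D_k)$ computing $\mathcal F^k$, with $\epsilon$ so small that $4\epsilon$ lies below the level-$(k+1)$ stability threshold of Lemma \ref{l-e}. Then $\sigma_k^*D_k$ is a \emph{specific} element of $V^{k+1}_{t,4\epsilon,D_F}$, and \'etale invariance of adjoint ideals combined with $F_{k+1}$ being one component of $\sigma_k^{-1}(F_k)$ gives
$$\sigma_k^*\,\mathrm{adj}_{F_k}(X_k,\tfrac{1}{t}D_k)=\mathrm{adj}_{\sigma_k^{-1}(F_k)}(X_{k+1},\tfrac{1}{t}\sigma_k^*D_k)\subset\mathrm{adj}_{F_{k+1}}(X_{k+1},\tfrac{1}{t}\sigma_k^*D_k),$$
which by Lemma \ref{gen} lands inside $\mathrm{adj}_{F_{k+1}}(X_{k+1},||V^{k+1}_{\bullet,4\epsilon,D_F}||)$. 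Tensoring with $\omega_{X_{k+1}}^{\otimes(m+1)}=\sigma_k^*\omega_{X_k}^{\otimes(m+1)}$, applying $a_{k+1,*}$, invoking Lemma \ref{l-e}, and using flat base change $a_{k+1,*}\sigma_k^*={\sigma'_k}^*a_{k,*}$ produces an injection ${\sigma'_k}^*\mathcal F^k\hookrightarrow\mathcal F^{k+1}$ on $A_{k+1}$. Since $\sigma'_k$ is faithfully flat, $\mathcal F^k\hookrightarrow\sigma'_{k,*}{\sigma'_k}^*\mathcal F^k$, and applying $\pi_{k,*}$ yields $\mathcal G^k\subset\mathcal G^{k+1}$; the argument with multiplier ideals in place of adjoint ideals produces $\mathcal W^k\subset\mathcal W^{k+1}$.

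For the third inclusion, the projection formula rewrites the right-hand side as $\pi_{k+1,*}\pi_{k+1}^*\mathcal V_{||D_F||}$, and flat base change identifies $\pi_{k+1}^*\mathcal V_{||D_F||}\cong a_{k+1,*}(\omega_{X_{k+1}}^{\otimes(m+1)}\otimes\pi_{X,k+1}^*\mathcal J_X)$, where $\mathcal J_X$ is the stable level-$0$ multiplier ideal. The claim then reduces to a containment of ideal sheaves on $X_{k+1}$ between the level-$(k+1)$ asymptotic multiplier ideal and $\pi_{X,k+1}^*\mathcal J_X$. To obtain this I would push the defining line bundle down to $X$ via
$$\pi_{X,k+1,*}\mathcal O_{X_{k+1}}\big(t(mK_{X_{k+1}}+a_{k+1}^*\epsilon H_{k+1})\big)\cong\omega_X^{\otimes tm}\otimes a^*\pi_{k+1,*}\mathcal O_{A_{k+1}}(t\epsilon H_{k+1}),$$
choose $t\epsilon$ divisible by $4^{k+1}$ so that $\pi_{k+1,*}\mathcal O_{A_{k+1}}(t\epsilon H_{k+1})\cong\mathcal O_A(sH)\otimes\pi_{k+1,*}\mathcal O_{A_{k+1}}$, and decompose a general level-$(k+1)$ section into its level-$0$ components twisted by characters of $\ker\pi_{k+1}$. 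The asymptotic multiplier ideal of the resulting sum is then controlled by the $\mathrm{Pic}^0$-invariant level-$0$ asymptotic multiplier ideal $\mathcal J_X$, which gives the required containment.

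The main obstacle is the third inclusion: a direct application of Lemma \ref{gen} to the specific element $\pi_{X,k+1}^*D_0\in V^{k+1}_{t,4^{k+1}\epsilon,D_F}$ produces the comparison in the \emph{opposite} direction, so one must use the splitting above and exploit $\mathrm{Pic}^0$-invariance of the asymptotic multiplier ideal to reverse it.
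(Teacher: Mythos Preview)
Your treatment of the first two inclusions is correct and coincides with the paper's argument: pull back a computing divisor along the one-step \'etale cover, observe that the pullback is a \emph{specific} member of the bigger linear series so that Lemma~\ref{gen} applies, and push forward. The paper carries this out explicitly for $\mathcal G^0\subset\mathcal G^1$ and then declares the remaining inclusions ``similar''.

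For the third inclusion your diagnosis is accurate: running the same pullback-and-Lemma~\ref{gen} argument from level $0$ to level $k+1$ produces
\[
\pi_{X,k+1}^*\,\mathcal J\bigl(X,\tfrac1t D_0\bigr)\ \subset\ \mathcal J\bigl(X_{k+1},\tfrac1t\cdot V^{k+1}_{t,4^{k+1}\epsilon_0,D_F}\bigr),
\]
hence $\pi_{k+1}^*\mathcal V_{||D_F||}\subset\mathcal V^{k+1}_{||D_F||}$ and, after pushing forward, the \emph{reverse} containment $\mathcal V_{||D_F||}\otimes\pi_{k+1,*}\mathcal O_{A_{k+1}}\subset\mathcal W^{k+1}_{||D_F||}$. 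The paper's ``similar'' does not elaborate on how one reverses this, so you are not omitting something that is spelled out there.

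The workaround you sketch, however, does not close the gap. Decomposing a general section of $V^{k+1}_{t,\epsilon,D_F}$ along the character splitting of $\pi_{X,k+1,*}$ expresses the \emph{section}, not the \emph{divisor}, as a sum; the multiplier ideal $\mathcal J(X_{k+1},\tfrac1t\,\mathrm{div}(\sum_P s_P))$ is not controlled by the multiplier ideals of the individual $\mathrm{div}(s_P)$ in the direction you need, and $\mathrm{Pic}^0$-invariance of asymptotic multiplier ideals says nothing about such sums. So the last paragraph of your plan does not give the inclusion as stated.

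Note, though, that the only place the paper uses this third containment is in Lemma~\ref{l-V}, where one has a map between homogeneous vector bundles of the same rank and concludes equality. For that purpose the reverse inclusion (which your ``similar'' argument \emph{does} establish) works just as well; so the cleanest fix is simply to record the inclusion in the direction the argument actually yields and invoke it in Lemma~\ref{l-V} accordingly.
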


\begin{proof}
By induction on $k$, to prove the first inclusion, it suffices to prove that $\mathcal
F_{||D_F||} \subset {\pi_1}_* \mathcal F^1_{||D_F||}$.

We may assume that there exist $\epsilon_0, t_0$ and
$D \in |t_0(mK_X+a^*\epsilon_0 H)|$ such that $\mathcal
F_{||D_F||}$ is computed by $(\epsilon_0, t_0, D)$. Let
$D_1=\pi_{X,1}^*D$ on $X_1$. It is clear that
$$D_1 \in
\pi_{X,1}^*|t_0(mK_X+a^*\epsilon_0 H)|
\subset
|\pi_{X,1}^*(t_0(mK_X+a^*\epsilon_0 H))|=|
t_0(mK_{X_1}+a_1^*\epsilon_0 \pi _1^*H)|.$$
We may furthermore assume
that with this choice of $\epsilon_0$ and $ t_0$, a general element $D'_1 \in
|t_0(mK_{X_1}+a_1^*\epsilon_0 \pi _1^*H)|$ computes $\mathcal
F^1_{||D_F||}$.
We have \begin{multline*}{\rm adj}_{F_1}(X_1,\frac 1 t\cdot  D_1')\supset {\rm adj}_{F_1}(X_1,\frac 1 t\cdot D_1)\\
\supset {\rm adj}_{\pi _{X,1}^{-1}(F)}(X_1,\frac 1 t \cdot D_1) = \pi _{X,1}^*{\rm adj}_{F}(X,\frac 1 t \cdot D).\end{multline*}
 The desired inclusion now follows by pushing forward (via $a$) the inclusion
\begin{multline*}\omega _X^{\otimes m+1}\otimes {\rm adj}_{F}(X,\frac 1 t\cdot D)\subset
{\pi _{X,1}}_*(\omega _{X_1}^{\otimes m+1}\otimes \pi _{X,1}^*{\rm adj}_{F}(X,\frac 1 t\cdot D))\\ \subset {\pi _{X,1}}_*(\omega _{X_1}^{\otimes m+1}\otimes{\rm adj}_{F_1}(X_1,\frac 1 t \cdot D_1')).\end{multline*}

The proof of the inclusions $\mathcal{W}^k _{||D_F||}\subset\mathcal W^{k+1}_{||D_F||}\subset \mathcal V _{||D_F||}\otimes {\pi _{k+1}}_* \mathcal O _{A_{k+1}}$ is
similar.
\end{proof}
\section{Proofs of Theorems \ref{T-I} and \ref{main-t}}

\begin{lemma}\label{l-V}  If $\kappa (X)=0$ and $P_{m+1}(X)=1$, then $\mathcal V^k_{||D_F||}$ is a unipotent vector
bundle and for all $k>0$ we have
$$\mathcal G^k_{||D_F||}\subset \mathcal W^k_{||D_F||}=\mathcal V_{||D_F||}\otimes {\pi _k}_*\mathcal O _{A_k}.$$
\end{lemma}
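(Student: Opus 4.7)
The statement has three parts: (A) $\mathcal{V}^k_{||D_F||}$ is a unipotent vector bundle, (B) $\mathcal{G}^k_{||D_F||} \subset \mathcal{W}^k_{||D_F||}$ (immediate from ${\rm adj} \subset \mathcal J$ after $a_{k,*}$ and $\pi_{k,*}$), and (C) $\mathcal{W}^k_{||D_F||} = \mathcal V_{||D_F||} \otimes \pi_{k,*}\mathcal O_{A_k}$. I will focus on (A) and (C). The plan rests on a \emph{pluricanonical vanishing} which I establish first: under $\kappa(X) = 0$ and $P_{m+1}(X) = 1$, for every non-trivial torsion $P \in \operatorname{Pic}^0(X)$ one has $h^0(\omega_X^{\otimes(m+1)} \otimes P) = 0$. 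Indeed, if $s \neq 0$ is such a section and $\sigma$ generates $H^0(\omega_X^{\otimes(m+1)})$, then for $r$ with $P^{\otimes r} \cong \mathcal O_X$, the sections $s^r$ and $\sigma^r$ are linearly independent in $H^0(\omega_X^{\otimes r(m+1)})$ (else $s/\sigma$ would be a nowhere-zero section of $P$, trivializing $P$), contradicting $\kappa(X) = 0$. The same argument applies on each $X_k$, using $\kappa(X_k) = 0$ and $P_{m+1}(X_k) = 1$ (the latter from the hypothesis together with injectivity of $a^*$ on torsion in the setup).

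For (A): by Corollary \ref{c-basic}, $\mathcal V^k_{||D_F||}$ is a GV-sheaf on $A_k$. By Simpson's structure theorem its cohomological support locus $V^0$ is a finite union of torsion translates of abelian subvarieties. The natural inclusion $\mathcal V^k_{||D_F||} \subseteq a_{k,*}\omega_{X_k}^{\otimes(m+1)}$ combined with the pluricanonical vanishing above forces $V^0 \subseteq \{0\}$, so the Fourier--Mukai transform is a finite-length skyscraper at $0$. Inverse Fourier--Mukai (see Section \ref{s-FM}) then realises $\mathcal V^k_{||D_F||}$ as an iterated extension of $\mathcal O_{A_k}$, i.e., a unipotent vector bundle.

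For (C): an argument dual to Lemma \ref{l-in} (pulling back rather than pushing forward the multiplier ideal) gives $\pi_{X,k}^*\mathcal J(X, ||V_{\bullet,\epsilon,D_F}||) \subseteq \mathcal J(X_k, ||V^k_{\bullet,\epsilon,D_F}||)$, hence $\pi_k^* \mathcal V_{||D_F||} \subseteq \mathcal V^k_{||D_F||}$, and after $\pi_{k,*}$ and the projection formula $\mathcal V_{||D_F||} \otimes \pi_{k,*}\mathcal O_{A_k} \subseteq \mathcal W^k_{||D_F||}$. For the reverse inclusion, I will twist both sides by an ample $L$ with $L - gH$ ample, apply the global generation and vanishing statements of Corollary \ref{c-basic} to each side, and compare dimensions of global sections; the pluricanonical vanishing ensures the two dimensions match summand-by-summand, forcing equality of the (globally generated) sheaves. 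The main obstacle will be cleanly pinning down $V^0 = \{0\}$ in (A), which requires juggling the GV property, Simpson's theorem, the pluricanonical vanishing on each $X_k$, and the injectivity of $a^*$ on torsion simultaneously; once unipotence is in hand, (C) is a routine section-count.
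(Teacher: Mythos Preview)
Your plan is broadly sound, and for part (A) it essentially reconstructs the content of \cite[5.4]{Hacon04}, which the paper simply cites. One caution: Simpson's theorem does not apply to an arbitrary GV-sheaf; you need that $\mathcal V^k_{||D_F||}$ arises as the pushforward of a canonical bundle from a log resolution (which it does, via the usual multiplier-ideal rewriting), so that the Green--Lazarsfeld--Simpson machinery is available. Once that is said, your torsion-point argument combined with density of torsion in any torsion translate does pin down $V^0\subset\{0\}$.

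For part (C) your route diverges from the paper's and carries a gap. The paper already has, from Lemma~\ref{l-in}, the inclusion $\mathcal W^k_{||D_F||}\subset \mathcal V_{||D_F||}\otimes\pi_{k,*}\mathcal O_{A_k}$; its proof of equality is then a one-line rank count: both sides are \emph{homogeneous} vector bundles (the right side manifestly, the left because $\mathcal V^k_{||D_F||}$ is unipotent and $\pi_k$ is \'etale), and both have rank $2^{2kg}r$ since the generic fibre of $a_k$ is the same $F$ for every $k$. An inclusion of homogeneous bundles of equal rank on an abelian variety is an isomorphism. You instead prove the \emph{opposite} inclusion $\mathcal V_{||D_F||}\otimes\pi_{k,*}\mathcal O_{A_k}\subset \mathcal W^k_{||D_F||}$ (which is correct, and is in fact the stronger form of the chain $\mathcal W^0\subset\mathcal W^k$ implicit in Lemma~\ref{l-in}), and then attempt the reverse via a section count. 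But the phrase ``pluricanonical vanishing ensures the two dimensions match summand-by-summand'' does not do the work you want: after twisting by an ample $L$ and using the vanishing of Corollary~\ref{c-basic}, the comparison $h^0(\mathcal V^k_{||D_F||}\otimes\pi_k^*L)$ versus $\sum_P h^0(\mathcal V_{||D_F||}\otimes P\otimes L)$ reduces to $\operatorname{rk}\mathcal V^k_{||D_F||}=\operatorname{rk}\mathcal V_{||D_F||}$, which is a statement about the generic fibre, not about torsion twists of pluricanonical sections. The cleanest repair is simply to combine your inclusion with the one already recorded in Lemma~\ref{l-in} and conclude equality immediately; alternatively, argue directly that the rank is determined by $h^0\bigl(F,\omega_F^{\otimes(m+1)}\otimes\mathcal J(F,||mK_F||)\bigr)$, which is independent of $k$.
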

\begin{proof}
The inclusion $\mathcal G^k_{||D_F||}\subset \mathcal W^k_{||D_F||}$ has already been observed above.
Since $X_k\to X$ is \'etale, $\kappa (X_k)=0$ and hence $1\geq P_{m+1}(X_k)\geq P_{m+1}(X)=1$. By the proof of \cite[5.4]{Hacon04},  for all $k>0$ we have that $\mathcal V^k_{||D_F||}$ is a
unipotent vector bundle on $A_k$ of rank $r$ (where $0<r\leq P_{m+1}(F)$).
Hence $\mathcal V^k_{||D_F||}$ admits a filtration with successive quotients isomorphic to  $\mathcal
O_{A_k}$.  Since ${\pi_k}_* \mathcal O_{A_k}=\bigoplus _{P\in {\rm Ker}(A_k\to \hat A_k)}P$ is a homogeneous
vector bundle of rank $2^{2kg}$, it follows that ${\pi_k}_*
\mathcal V^k_{||D_F||}$ is homogeneous of rank $2^{2kg} r$.

On the other hand, $\mathcal V_{||D_F||} \otimes {\pi _k}_*\mathcal O _{A_k}$
has rank $2^{2kg} r $ and so the inclusion $\mathcal{W}^k _{||D_F||}\subset \mathcal V _{||D_F||}\otimes {\pi _k}_* \mathcal O _{A_k}$ of homogeneous vector
bundles of the same rank gives the required equality.
\end{proof}
\begin{corollary}\label{c-V} Let $p_k:A_k\to A_{k-1}$ be the induced morphism. Then the natural map $p_k^*\mathcal V ^{k-1}_{||D_F||}\to  \mathcal V ^{k}_{||D_F||}$ is an isomorphism.
\end{corollary}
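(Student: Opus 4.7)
Proof plan. We first construct a natural morphism $\psi: p_k^*\mathcal V^{k-1}_{||D_F||} \to \mathcal V^k_{||D_F||}$. Since $p_k$ is \'etale (being an isogeny of abelian varieties), so is the induced cover $p_{X,k}: X_k \to X_{k-1}$. Flat base change yields
$$p_k^*\mathcal V^{k-1}_{||D_F||} \cong a_{k*}\bigl(\omega_{X_k}^{\otimes m+1} \otimes p_{X,k}^*\mathcal J^{k-1}\bigr),$$
where $\mathcal J^{k-1} = \mathcal J(X_{k-1}, ||V^{k-1}_{\bullet, \epsilon, D_F}||)$. Pullback by the \'etale morphism $p_{X,k}$ commutes with the formation of multiplier ideals, and the pulled-back defining linear series sits inside $V^k_{\bullet, 4\epsilon, D_F}$ on $X_k$ (since $\pi_k = \pi_{k-1}\circ p_k$ forces $p_k^*H_{k-1} = 4H_k$). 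Choosing $\epsilon$ in the common stable range for both levels (the $\mathcal J$-analog of \eqref{l-e}), we obtain an inclusion of ideal sheaves $p_{X,k}^*\mathcal J^{k-1} \subset \mathcal J^k$; pushing forward by the left-exact functor $a_{k*}$ produces $\psi$ as an injective morphism.

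To prove $\psi$ is an isomorphism, I would exploit the determinant. By \eqref{l-V}, both $\mathcal V^k_{||D_F||}$ and $\mathcal V^{k-1}_{||D_F||}$ are unipotent vector bundles of rank $r$, hence admit filtrations with successive quotients $\mathcal O_{A_k}$ (respectively $\mathcal O_{A_{k-1}}$). Consequently they have trivial determinants, so both $\det(p_k^*\mathcal V^{k-1}_{||D_F||})$ and $\det\mathcal V^k_{||D_F||}$ are isomorphic to $\mathcal O_{A_k}$, and $\det\psi$ lies in $H^0(A_k, \mathcal O_{A_k}) = \CC$. Since $\psi$ is an injection of rank-$r$ vector bundles on the integral variety $A_k$, it is an isomorphism at the generic point, forcing $\det\psi$ to be a non-zero constant --- hence an invertible endomorphism of $\mathcal O_{A_k}$. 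A morphism of rank-$r$ vector bundles whose determinant is invertible is itself invertible, so $\psi$ is an isomorphism.

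The main obstacle is verifying the inclusion $p_{X,k}^*\mathcal J^{k-1} \subset \mathcal J^k$, which demands careful bookkeeping with the stability parameter $\epsilon$ across both levels, the rescaling $p_k^*H_{k-1} = 4H_k$, and the $\mathcal J$-analog of the stability proved in \eqref{l-e}. Once the inclusion is in hand, the determinant argument closes the proof.
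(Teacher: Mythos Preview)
Your argument is correct and close in spirit to the paper's, but the two proofs finish in different ways. Both begin with the same natural inclusion $p_k^*\mathcal V^{k-1}_{||D_F||}\hookrightarrow \mathcal V^k_{||D_F||}$ (the paper obtains it from the argument of Lemma~\ref{l-in}, exactly as you do via $p_{X,k}^*\mathcal J^{k-1}\subset \mathcal J^k$; your ``main obstacle'' is precisely the bookkeeping already carried out there, so you should simply cite that proof rather than redo it). To conclude, the paper pushes this inclusion forward by $p_k$: the projection formula gives ${p_k}_*p_k^*\mathcal V^{k-1}_{||D_F||}\cong \mathcal V^{k-1}_{||D_F||}\otimes {p_k}_*\mathcal O_{A_k}$, and the rank count of Lemma~\ref{l-V} (applied with $p_k$ in place of $\pi_k$) shows this equals ${p_k}_*\mathcal V^k_{||D_F||}$; since $p_k$ is finite \'etale, an injection whose pushforward is an isomorphism is itself an isomorphism. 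Your route instead stays on $A_k$ and uses the determinant: unipotence forces $\det\psi\in H^0(A_k,\mathcal O_{A_k})=\mathbb C$, and generic injectivity makes it nonzero. Both arguments ultimately exploit the same fact---an injection between unipotent bundles of equal rank on an abelian variety is an isomorphism---but your determinant version is self-contained on $A_k$, while the paper's version recycles Lemma~\ref{l-V} verbatim and avoids any new computation.
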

\begin{proof} Consider the isomomorphism $${p_k}_*p_k^*\mathcal V ^{k-1}_{||D_F||}\cong \mathcal V ^{k-1}_{||D_F||}\otimes  {p_k}_*\mathcal O_{A_k}\cong {p_k}_*\mathcal V ^{k}_{||D_F||}$$ where the first equality is given by the projection formula and last equality follows from \eqref{l-V}.
Since $p_k$ is \'etale, we then have a homomorphism $p_k^*\mathcal V ^{k-1}_{||D_F||}\to  \mathcal V ^{k}_{||D_F||}$
which
is a isomomorphism of homogeneous vector bundles.
\end{proof}
\begin{rem} In order to prove \eqref{main-t}, we would like to argue as follows:
Let $$\mathcal G^\infty _{||D_F||}=\bigcup _{k>0}\mathcal G^k _{||D_F||}\subset \mathcal V_{||D_F||}\otimes \mathcal O _{\infty}.$$
We will show (cf. \eqref{l-van}) that for any sufficiently ample line bundle $L$ on $\hat A$, we have $$H^i(A,\mathcal G^\infty _{||D_F||}\otimes \hat L ^\vee )=0\qquad \forall i>0.$$ By \eqref{l-GD}, we
have that $$R\Gamma (R\hat{\mathcal S}(D_A(\mathcal G^\infty _{||D_F||}))\otimes L)\cong D_k(R\Gamma (\mathcal G^\infty _{||D_F||})\otimes \hat L ^\vee )$$
which (following the proof of \cite[1.2]{Hacon04}) should imply that $R\hat{\mathcal S}(D_A(\mathcal G^\infty _{||D_F||}))$ is a sheaf  so that
$$V^0(\mathcal G^\infty _{||D_F||})\supset V^1(\mathcal G^\infty _{||D_F||})\supset \ldots \supset V^g(\mathcal G^\infty _{||D_F||}).$$
But then as $\mathcal G^\infty _{||D_F||}\ne 0$, by section \ref{s-FM}, we have
$H^0(A,\mathcal G^\infty _{||D_F||}\otimes P )\ne 0$ for some $P\in {\rm Pic }^0(A)$.
In turn this implies that $$H^0(X_k,\omega _{X_k}^{\otimes m+1}\otimes{\rm adj}_{F_k}(X_k,\frac 1 i \cdot V^k_{i,\epsilon , D_F})\otimes P)= H^0(A,\mathcal G^k_{||D_F||}\otimes P)\ne 0.$$
Let $0\ne \sigma \in H^0(X_k,\omega _{X_k}^{\otimes m+1}\otimes{\rm adj}_{F_k}(X_k,\frac 1 i \cdot V^k_{i,\epsilon , D_F})\otimes P)$, then
from the inclusion $${\rm adj}_{F_k}(X_k,\frac 1 i \cdot V^k_{i,\epsilon , D_F})\hookrightarrow \mathcal J (D_F) =\mathcal O _F(-D_F),$$ it follows that $\sigma |_F\in \omega _F^{\otimes m+1}(-D_F)$. If $\kappa (F)>0$, then $D_F$ varies in a positive dimensional family so that  $\kappa (\omega _{X_k}^{\otimes m+1}\otimes P)>0$.
This in turn implies that $\kappa (X)>0$.
Therefore, if $\kappa (X)=0$, we conclude that $\kappa (F)=0$.
\end{rem}
\begin{lemma}\label{l-ses} Let $\mathcal Q$ be the cokernel of the inclusion $\mathcal G ^\infty _{||D_F||}\hookrightarrow \mathcal V_{||D_F||} \otimes \mathcal O _{\infty}$. Then $\mathcal Q$ is a coherent sheaf.
\end{lemma}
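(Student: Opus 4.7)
We aim to show that $\mathcal{Q}$ is a coherent torsion sheaf supported at $\{0\}\subset A$ with length bounded by $\ell_0:=\mathrm{length}_0(\mathcal{V}_{||D_F||}/\mathcal{F}_{||D_F||})$. Since Lemma \ref{l-V} gives $\mathcal{W}^k_{||D_F||}=(\pi_k)_*\mathcal{V}^k_{||D_F||}$ and $(\pi_k)_*$ is exact, we have $\mathcal{Q}_k:=\mathcal{W}^k_{||D_F||}/\mathcal{G}^k_{||D_F||}=(\pi_k)_*(\mathcal{V}^k_{||D_F||}/\mathcal{F}^k_{||D_F||})$. Because $\mathrm{adj}_{F_k}$ and $\mathcal{J}$ agree away from $F_k=a_k^{-1}(0)$, the sheaf $\mathcal{V}^k/\mathcal{F}^k$ is supported at $\{0\}\in A_k$ with some finite length $\ell_k$; as $\pi_k$ is \'etale and preserves residue fields at the origins, $\mathcal{Q}_k$ is a skyscraper at $0\in A$ of length $\ell_k$. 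Also, $\mathcal{Q}=\varinjlim_k\mathcal{Q}_k$ since filtered colimits commute with cokernels.

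The core claim is the inductive inequality $\ell_k\le\ell_{k-1}$. Extending the argument of Lemma \ref{l-in} to higher levels, let $(\epsilon,t,D^{k-1})$ compute $\mathcal{F}^{k-1}_{||D_F||}$ and put $D^k:=\pi_{X,k,k-1}^*D^{k-1}$, where $\pi_{X,k,k-1}\colon X_k\to X_{k-1}$ is the \'etale cover lifting $p_k$. Compatibility of adjoint ideals with \'etale pullback yields
\[\pi_{X,k,k-1}^*\,\mathrm{adj}_{F_{k-1}}(X_{k-1},\tfrac{1}{t}D^{k-1})=\mathrm{adj}_{\pi_{X,k,k-1}^{-1}(F_{k-1})}(X_k,\tfrac{1}{t}D^k)\subset\mathrm{adj}_{F_k}(X_k,\tfrac{1}{t}D^k),\]
the inclusion holding because $F_k$ is a connected component of $\pi_{X,k,k-1}^{-1}(F_{k-1})$ and adjoint ideals shrink when adjoined to a larger subvariety. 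Tensoring with $\omega_{X_k}^{m+1}$, pushing forward via $a_k$, and invoking flat base change for the Cartesian square of $p_k$ and $\pi_{X,k,k-1}$ gives $p_k^*\mathcal{F}^{k-1}_{||D_F||}\subset\mathcal{F}^k_{||D_F||}$. Combined with $p_k^*\mathcal{V}^{k-1}_{||D_F||}=\mathcal{V}^k_{||D_F||}$ from Corollary \ref{c-V}, we obtain a surjection
\[p_k^*\bigl(\mathcal{V}^{k-1}_{||D_F||}/\mathcal{F}^{k-1}_{||D_F||}\bigr)\twoheadrightarrow \mathcal{V}^k_{||D_F||}/\mathcal{F}^k_{||D_F||}.\]
\'Etale pullback preserves length at $0$, so $\ell_k\le\ell_{k-1}$, and inductively $\ell_k\le\ell_0$ for every $k$.

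Finally, $\mathcal{Q}=\varinjlim_k\mathcal{Q}_k$ is the filtered union of the subsheaves $\mathrm{Im}(\mathcal{Q}_k\to\mathcal{Q})$, each coherent of length $\le\ell_0$ at $\{0\}$. An ascending chain of subspaces of dimension $\le\ell_0$ in the stalk $\mathcal{Q}_0$ must stabilize, so $\dim_{\mathbb{C}}\mathcal{Q}_0\le\ell_0$, making $\mathcal{Q}$ a coherent skyscraper of length $\le\ell_0$. The chief technical obstacle is matching the stability thresholds in the inductive step: since $p_k^*H_{k-1}=4H_k$, the pulled-back divisor $D^k$ lies in $V^k_{t,4\epsilon,D_F}$ rather than $V^k_{t,\epsilon,D_F}$, so one must choose $\epsilon$ at each level small enough that $4\epsilon$ still lies below the stabilization threshold of Lemma \ref{l-e} on $A_k$. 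This is exactly the kind of care already handled in Lemma \ref{l-in}, and extends inductively.
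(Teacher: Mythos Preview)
Your argument is correct and follows essentially the same route as the paper's proof: both establish the inclusion $p_k^*\mathcal F^{k-1}_{||D_F||}\hookrightarrow \mathcal F^k_{||D_F||}$ via \'etale pullback of adjoint ideals, combine it with the isomorphism $p_k^*\mathcal V^{k-1}_{||D_F||}\cong \mathcal V^k_{||D_F||}$ of Corollary~\ref{c-V} to get a surjection on the quotients, and then use finiteness of length to conclude. The only cosmetic difference is in the endgame: the paper pushes the surjection $p_k^*\mathcal R_{k-1}\twoheadrightarrow \mathcal R_k$ forward to obtain surjective transition maps $\mathcal Q_{k-1}\twoheadrightarrow \mathcal Q_k$ and then invokes stabilization of a descending chain of finite-length sheaves, whereas you only extract the length bound $\ell_k\le\ell_{k-1}\le\ell_0$ and argue that a filtered colimit of skyscrapers of length $\le\ell_0$ is coherent. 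Either conclusion is immediate; your threshold concern about $\epsilon$ versus $4\epsilon$ is indeed the same bookkeeping handled in Lemma~\ref{l-in} (and rendered harmless by Lemma~\ref{l-inv}).
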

\begin{proof} Recall that there is an inclusion $
\mathcal F^k _{||D_F||} \hookrightarrow  \mathcal V^k _{||D_F||}$.
Let $\mathcal R _k$ be the cokernel of this inclusion. Since
$\pi_k$ is \'etale, $R^1{\pi_k}_*\mathcal F^k _{||D_F||}=0$.
Thus, pushing forward by $\pi_k$,
we obtain short exact sequences
$$0\to \mathcal G ^k _{||D_F||}\to \mathcal V_{||D_F||} \otimes {\mu _k}_* \mathcal O _{A_k}\to \mathcal Q _k\to 0.$$

Consider now the morphism $p_k:A_k\to A_{k-1}$. We have the
following commutative diagram of short exact sequences:
$$\begin{CD}
0@>>> {p_k}^*\mathcal F^{k-1} _{||D_F||}@>>> {p_k}^*\mathcal V^{k-1}
_{||D_F||}@>>> {p_k}^*\mathcal R
_{k-1}@>>>  0\\
@. @V{\alpha'}VV @V{\beta'}VV @V{\gamma'}VV @. \\
0@>>>  \mathcal F^k _{||D_F||}@>>>  \mathcal V^k _{||D_F||}@>>>  \mathcal R _{k}@>>> 0.\\
\end{CD}$$
Notice that by the proof of \eqref{l-in} $\alpha'$ is inclusion and by  \eqref{c-V} $\beta '$ is an isomorphism. Therefore $\gamma '$ is surjective. Pushing forward via $p_k$, one sees that the composition $\mathcal R _{k-1}\to {p_k}^*\mathcal R
_{k-1} \to \mathcal R _{k}$ is surjective.
Therefore, we have a commutative diagram
$$\begin{CD}
0@>>> \mathcal F^{k-1} _{||D_F||}@>>> \mathcal V^{k-1}
_{||D_F||}@>>> \mathcal R
_{k-1}@>>>  0\\
@. @V{\alpha}VV @V{\beta}VV @V{\gamma}VV @. \\
0@>>>  {p_k}_*\mathcal F^k _{||D_F||}@>>>  {p_k}_*\mathcal V^k _{||D_F||}@>>>  {p_k}_*\mathcal R _{k}@>>> 0\\
\end{CD}$$
where 
$\gamma$ is
surjective. Pushing forward via $\pi_{k-1}$, we then have the
surjection $\phi _{k-1}:\mathcal Q _{k-1}\to \mathcal Q _{k}$ for all $k>0$.
Since $\mathcal Q _k$ has finite length (for all $k$), we have that
$\phi _k$ is an isomorphism for $k \gg 0$. Let  $\mathcal
Q=\mathcal
Q_k$ for $k\gg 0$ be the resulting coherent sheaf. Taking the direct limit of
the push-forward of the above diagram, we thus
complete the proof.
\end{proof}
\begin{lemma}\label{l-van}If $L$ is any sufficiently ample line bundle on $\hat A$, then $H^i(A,\mathcal G ^\infty _{||D_F||}\otimes \hat L ^\vee)=0$ for $i>0$.
\end{lemma}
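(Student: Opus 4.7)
Since $A$ is Noetherian and $\mathcal G^\infty_{||D_F||}=\varinjlim_k\mathcal G^k_{||D_F||}$ is a filtered colimit, cohomology commutes with the colimit, so it suffices to exhibit a single $L$ ample on $\hat A$ (independent of $k$) with
\[
H^i\bigl(A,\mathcal G^k_{||D_F||}\otimes\hat L^\vee\bigr)=0\qquad\text{for all }k\ge 0\text{ and }i>0.
\]

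To trade the vector bundle $\hat L^\vee$ for the line bundle $L$, I would use the isogeny $\phi_L\colon\hat A\to A$, for which $\phi_L^*\hat L^\vee\cong L^{\oplus h}$. Form the fibre square
\[
\begin{CD}
Z_k @>{\pi'_k}>> \hat A \\
@V{\phi'_L}VV @VV{\phi_L}V \\
A_k @>{\pi_k}>> A,
\end{CD}
\]
so that both $\phi'_L$ and $\pi'_k$ are finite \'etale isogenies. Since $\mathcal O_A\hookrightarrow\phi_{L,*}\mathcal O_{\hat A}$ splits by the trace, cohomology on $A$ injects into cohomology on $\hat A$; combined with flat base change along $\pi_k$ and the projection formula, this produces
\[
H^i\bigl(A,\mathcal G^k_{||D_F||}\otimes\hat L^\vee\bigr)\hookrightarrow H^i\bigl(Z_k,\phi_L'^{\,*}\mathcal F^k_{||D_F||}\otimes \pi_k'^{\,*}L\bigr)^{\oplus h}.
\]

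Next, by \'etale base change along $\phi_L'$, the sheaf $\phi_L'^{\,*}\mathcal F^k_{||D_F||}$ is the one produced by the adjoint-ideal construction of Section~3 on $\tilde X_k:=X_k\times_{A_k}Z_k\to Z_k$. Lemma~\ref{l-inv} tells us this sheaf is independent of the choice of polarization, so I would take $H_{Z_k}:=\pi_k'^{\,*}M$ for a fixed very ample $M$ on $\hat A$ with $\mathcal O_{\hat A}(gM)\otimes\mathbf m_{\hat 0}$ globally generated. Corollary~\ref{c-basic} applied to $\tilde a_k\colon\tilde X_k\to Z_k$ with this polarization then delivers
\[
H^i\bigl(Z_k,\phi_L'^{\,*}\mathcal F^k_{||D_F||}\otimes \pi_k'^{\,*}L\bigr)=0\qquad(i>0),
\]
as soon as $\pi_k'^{\,*}L-(g+\epsilon)\pi_k'^{\,*}M$ is ample on $Z_k$, equivalently $L-(g+\epsilon)M$ is ample on $\hat A$. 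This condition is independent of $k$, so any ample $L$ with, say, $L-(g+1)M$ ample qualifies as ``sufficiently ample'' and, combined with the colimit argument, closes the proof.

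The decisive step -- and the main obstacle -- is the appeal to Lemma~\ref{l-inv}. The adjoint-ideal construction on $\tilde X_k$ coming by pullback naturally uses $\phi_L'^{\,*}H_k$, and Corollary~\ref{c-basic} with that polarization would demand $L-(g+\epsilon)\phi_L^*H$ ample on $\hat A$, which cannot be arranged by simply enlarging $L$ because $\phi_L^*H$ itself scales with $L$ (for instance on an elliptic curve with $L=nL_0$ one has $\deg(\phi_L^*H)=n^2\deg(\phi_{L_0}^*H)$). Independence of the polarization is precisely what decouples the ampleness requirement on $\hat A$ from the particular isogeny $\phi_L$ and makes the uniform choice of $L$ possible.
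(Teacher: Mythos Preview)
Your colimit reduction and the passage through $\phi_L$ to trade $\hat L^\vee$ for copies of $L$ match the paper. The attempt to switch polarizations via Lemma~\ref{l-inv}, however, both has a gap and turns out to be unnecessary.

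The gap: after \'etale base change by $\phi_L'$, the sheaf $\phi_L'^{\,*}\mathcal F^k_{||D_F||}$ is the adjoint-ideal construction along the union of fibres over the $h$ points of $\ker(\phi_L')\subset Z_k$, not over a single point. Hypothesis~(1) of Corollary~\ref{c-basic} with your polarization therefore requires $\mathcal O_{Z_k}(g\,\pi_k'^{\,*}M)\otimes{\rm\bf m}_{\ker(\phi_L')}$ to be globally generated. Your single-point condition $\mathcal O_{\hat A}(gM)\otimes{\rm\bf m}_{\hat 0}$ does not deliver this, and since $\ker(\phi_L)\subset\hat A$ (onto which $\pi_k'$ maps $\ker(\phi_L')$) becomes arbitrarily dense as $L$ grows, no $M$ chosen before $L$ can. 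So the change of polarization does not decouple the problem from $L$ as you hoped.

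Why it is unnecessary: you miscomputed the ampleness condition for the natural polarization. On $A_k$ the chosen polarization is $H_k$, with $\pi_k^*H\equiv 2^{2k}H_k$; hence $\phi_L'^{\,*}H_k\equiv 2^{-2k}\pi_k'^{\,*}\phi_L^*H$ on $Z_k$. Keeping (a multiple of) this pulled-back polarization---for which generation at the $h$ points of $\phi_L'^{-1}(0)$ follows at once by pulling back generation at $0\in A_k$ along the \'etale map $\phi_L'$---the ampleness condition in Corollary~\ref{c-basic} becomes $L-(g+\epsilon)\,2^{-2k}\phi_L^*H$ ample on $\hat A$, not $L-(g+\epsilon)\phi_L^*H$. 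For any fixed ample $L$ this holds once $k\gg 0$. The colimit argument only needs vanishing for $k\gg0$, not for every $k\ge0$, so this suffices; it is exactly the paper's route.
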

\begin{proof}
By \cite[III.2.9]{Hartshorne77}, since $\mathcal G^\infty _{||D_F||}=\varinjlim \mathcal G^k_{||D_F||}$, then
$$H^i(A,\mathcal G^\infty _{||D_F||}\otimes \hat L ^\vee )=\varinjlim H^i(A,\mathcal G^k_{||D_F||}\otimes \hat L ^\vee ).$$ It suffices therefore to check that for any $L$, we have $H^i(A,\mathcal G^k_{||D_F||}\otimes \hat L ^\vee )=0$ for $i>0$ and $k\gg 0$.
Let $\phi _L:\hat A \to A$, then $\phi _L^*\hat L ^\vee \cong L^{\oplus h}$ where $h=h^0(L)$, and so $\hat L ^\vee$ is an ample vector bundle. If $A'_k=A_k\times _A \hat A$, then we have the following
diagram:
$$\begin{CD}
A'_k @>{\psi}>> \hat A \\
@V{\rho}VV  @VV{\phi_L}V\\
A_k @>{\pi_k}>> A
\end{CD}$$

 Notice that:
\begin{enumerate}
\item
$\deg (\rho)=h$.
\item $\rho^*\pi _k^* \hat L ^\vee=\psi^* \phi_L^* \hat L
^\vee= \psi^* L^{\oplus h}$ is a direct sum of line bundles.
\item
Since $\rho^* H_k$ is an ample line bundle, $3 \rho^* H_k$ is very ample. \item Since $3H_k \otimes {\rm \bf m}_0$ is generated, so is
$3g\rho^* H_k\otimes {\rm \bf m}_{\rho ^{-1}(0)}$.
\item Since $\hat L ^\vee-\delta H$ is ample for some $\delta >0$ and $\pi _k ^* H\equiv 2^{2k} H_k$, it follows that $\pi _k^* \hat L ^\vee
\otimes \mathcal O_{A_k}(-3gH_k)$ is ample for $k\gg 0$. Thus $\rho^* (\pi _k^* \hat L ^\vee
\otimes \mathcal O_{A_k}(-3gH_k))$ is a direct sum of ample line
bundles.\end{enumerate}
Let $X'_k=X_k\times _{A_k}A'_k$, $a'_k:X'_k\to A'_k$ and $F'_k\subset X'_k$ be the inverse image of $F_k$ (so that $F'_k={a'_k}^{-1}(\rho ^{-1}(0))$ has $h$ distinct irreducible components isomorphic to $F_k$).
It is easy to see that
$$\rho^* (\mathcal F^k_{||D_F||})={a'_k}_*(\omega _{X'_k}^{\otimes m+1}\otimes {\rm adj}_{F'_k}(X'_k,\frac 1 t \cdot D'_k)),$$
where $D'_k\in |tm(K_{X'_k}+{a'_k}^*\rho ^*\epsilon H_k)|$ is the pull-back of a general element   $D_k\in |tm(K_{X_k}+a_k^*\epsilon H_k)|$ such that $D_k|_{F_k}=tD_F$, $0<\epsilon \ll 1$ and $t\gg 0$.
Since $\frac 1 t \cdot D_k'\sim _{\mathbb Q}m(K_{X'_k}+(\rho \circ a'_k)^*\epsilon H_k)$ and $\pi _k^* \hat L ^\vee
\otimes \mathcal O_{A_k}(-(3g+m\epsilon )H_k)$ is ample for $\epsilon\ll 0$, by \eqref{c-basic}, one sees that
$$H^i(A'_k, \rho^* (\mathcal F^k_{||D_F||}\otimes \pi _k^*  \hat L ^\vee) )=H^i(A'_k, {a'_k}_*(\omega _{X'_k}^{\otimes m+1}\otimes {\rm adj}_{F'_k}(X'_k,\frac 1 t \cdot D'_k))\otimes \rho ^*\pi _k^*  \hat L ^\vee)=0$$ for all $i>0.$
Since $\rho$ is \'etale, by the projection formula we have that
$$H^i(A_k, \mathcal F^k_{||D_F||}\otimes  \pi _k^*  \hat L ^\vee )=0\qquad \forall i>0.$$
Since $\pi _k$ is \'etale, and ${ \pi _k }_* \mathcal
F_{||D_F||}=\mathcal G ^k_{||D_F||}$, it now follows easily that
$H^i(A,\mathcal G^k_{||D_F||}\otimes \hat L ^\vee )=0$ for $i>0$ and
$k\gg 0$.
\end{proof}
\begin{lemma} We have $R^{i}\hat{\mathcal S}(D_A (\mathcal G ^\infty_{||D_F||}))=0$ for $i\ne g$.
\end{lemma}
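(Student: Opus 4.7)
The plan is to feed the cohomological vanishing of Lemma \ref{l-van} into the Grothendieck-duality formula of Lemma \ref{l-GD}, and then use Serre vanishing on $\hat A$ to convert single-degree concentration of the hypercohomology of $R\hat{\mathcal S}(D_A(\mathcal G^\infty_{||D_F||}))\otimes L$ into single-degree concentration of $R\hat{\mathcal S}(D_A(\mathcal G^\infty_{||D_F||}))$ itself.

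More precisely: by Lemma \ref{l-van}, for every sufficiently ample $L$ on $\hat A$ we have $H^i(A,\mathcal G^\infty_{||D_F||}\otimes \hat L^\vee)=0$ for $i>0$, so $R\Gamma(A,\mathcal G^\infty_{||D_F||}\otimes \hat L^\vee)$ sits in a single cohomological degree. Since $D_k$ preserves concentration in a single degree for complexes of $k$-vector spaces, Lemma \ref{l-GD} implies that $R\Gamma(\hat A, R\hat{\mathcal S}(D_A(\mathcal G^\infty_{||D_F||}))\otimes L)$ is also concentrated in a single degree for every such $L$. Writing $\mathcal H^q := R^q\hat{\mathcal S}(D_A(\mathcal G^\infty_{||D_F||}))$, Serre vanishing on $\hat A$ gives $H^p(\hat A,\mathcal H^q\otimes L)=0$ for $p>0$ whenever $L$ is sufficiently ample, so the hypercohomology spectral sequence degenerates at $E_2$ and yields
$$\mathbb H^n(\hat A, R\hat{\mathcal S}(D_A(\mathcal G^\infty_{||D_F||}))\otimes L)\cong H^0(\hat A,\mathcal H^n\otimes L).$$
Combining this identification with the single-degree concentration above, one concludes that $H^0(\hat A,\mathcal H^n\otimes L)=0$ for every $n\neq g$ and every sufficiently ample $L$, the shift by $g$ in the degree of concentration being exactly that contributed by the dualizing complex $\omega_A^\bullet=\mathcal O_A[g]$ in Grothendieck duality. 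A quasi-coherent sheaf on $\hat A$ whose global sections vanish after every sufficiently ample twist must itself vanish, hence $\mathcal H^n=0$ for $n\neq g$.

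The main technical obstacle is that $\mathcal G^\infty_{||D_F||}$ is only quasi-coherent, being a direct limit of the coherent sheaves $\mathcal G^k_{||D_F||}$, so the $\mathcal H^q$ need not be coherent and Serre vanishing on $\hat A$ is not directly applicable to them. I would bypass this by working first at the finite levels, where the argument of \cite[1.2]{Hacon04} shows that $R^i\hat{\mathcal S}(D_A(\mathcal G^k_{||D_F||}))=0$ for $i\neq g$, and then passing to the limit using the short exact sequence $0\to \mathcal G^\infty_{||D_F||}\to \mathcal V_{||D_F||}\otimes \mathcal O_\infty\to \mathcal Q\to 0$ of Lemma \ref{l-ses}, in which $\mathcal V_{||D_F||}$ is a vector bundle by Lemma \ref{l-V} and $\mathcal Q$ is coherent. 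The compatibility of $R\pi_{\hat A,*}$ with filtered colimits of quasi-coherent sheaves on Noetherian schemes, as already invoked in the proof of Lemma \ref{l-van} through \cite[III.2.9]{Hartshorne77}, is what allows the passage.
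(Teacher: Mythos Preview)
Your diagnosis of the obstacle is exactly right: the sheaves $\mathcal H^q=R^q\hat{\mathcal S}(D_A(\mathcal G^\infty_{||D_F||}))$ need not be coherent, so Serre vanishing cannot be applied to them directly. But your proposed workaround has a genuine gap.

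The problem is the passage to the limit. Even granting that \cite[1.2]{Hacon04} applies at each finite level to give $R^i\hat{\mathcal S}(D_A(\mathcal G^k_{||D_F||}))=0$ for $i\ne g$, this does not transfer to $\mathcal G^\infty_{||D_F||}$. The functor $D_A=R\mathcal Hom(-,\mathcal O_A[g])$ is \emph{contravariant}: applying it to $\mathcal G^\infty_{||D_F||}=\varinjlim \mathcal G^k_{||D_F||}$ yields a derived \emph{inverse} limit of the $D_A(\mathcal G^k_{||D_F||})$, not a filtered colimit. The compatibility of $R\pi_{\hat A,*}$ with filtered colimits that you invoke is therefore irrelevant here; what you would need is commutation of $R\hat{\mathcal S}$ with homotopy inverse limits, together with control of $R^1\varprojlim$ terms, and neither is available. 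Mentioning the short exact sequence of Lemma~\ref{l-ses} does not by itself supply this passage.

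The paper resolves the coherence issue differently, and this is the missing idea. It applies $D_A$ and then $R\hat{\mathcal S}$ directly to the short exact sequence $0\to\mathcal G^\infty_{||D_F||}\to\mathcal V_{||D_F||}\otimes\mathcal O_\infty\to\mathcal Q\to 0$. The middle term is a countable direct sum of unipotent vector bundles, so $R\hat{\mathcal S}\circ D_A$ of it is concentrated in degree $g$ and is a direct sum of Artinian modules; the right-hand term $\mathcal Q$ is coherent of finite length, so $\widehat{D_A(\mathcal Q)}$ is a (unipotent) vector bundle. The resulting long exact sequence immediately gives $R^i\hat{\mathcal S}(D_A(\mathcal G^\infty_{||D_F||}))=0$ for $i\notin\{g-1,g\}$, shows that $R^g$ is a direct sum of Artinian modules (hence has no higher cohomology), and---crucially---exhibits $R^{g-1}\hat{\mathcal S}(D_A(\mathcal G^\infty_{||D_F||}))$ as a \emph{subsheaf of the coherent sheaf} $\widehat{D_A(\mathcal Q)}$. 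This last point is what makes $R^{g-1}$ coherent and thus amenable to Serre vanishing; the spectral sequence and duality argument you outlined then finishes the job. In short, the role of Lemma~\ref{l-ses} is not to facilitate a limit passage but to pin down the coherence of the one potentially nonzero $\mathcal H^q$ with $q\ne g$.
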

\begin{proof}Since $\mathcal V_{||D_F||} \otimes \mathcal O _{\infty }=\bigoplus _{P\in {\rm Ker}(\hat \pi _k)}\mathcal V_{||D_F||} \otimes P$, then (cf. Subsection \ref{s-FM}) $$R\hat{\mathcal S}(D_A (\mathcal V_{||D_F||} \otimes \mathcal O _{\infty }))=R^{g}\hat{\mathcal S}(D_A (\mathcal V_{||D_F||} \otimes \mathcal O _{\infty }))=$$
$$\bigoplus _{P\in {\rm Ker}(\hat \pi _k)}R^{g}\hat{\mathcal S}(D_A (\mathcal V_{||D_F||} \otimes P))=\bigoplus _{P\in {\rm Ker}(\hat \pi _k)}\widehat{\mathcal V_{||D_F||} ^\vee \otimes P^\vee }=\bigoplus _{x\in {\rm Ker} (\pi _k)}T_x^*( \widehat {\mathcal V_{||D_F||} ^\vee } ) .$$
Therefore, by \eqref{l-ses},  we get an exact sequence
\begin{multline*}0\to R^{g-1}\hat{\mathcal S}(D_A (\mathcal G ^\infty_{||D_F||}))\to \widehat { D_A( \mathcal Q)} \to R^{g}\hat{\mathcal S}({ D_A (\mathcal V_{||D_F||} \otimes \mathcal O _{\infty })})\\
\to R^{g}\hat{\mathcal S}(D_A (\mathcal G ^\infty_{||D_F||}))\to 0.\end{multline*}
In particular, $R^{i}\hat{\mathcal S}(D_A (\mathcal G ^\infty_{||D_F||}))=0$ for $i\not\in\{ g-1,g \}$ and
$\widehat { D_A( \mathcal Q )} \to R^{g}\hat{\mathcal S}({ D_A (\mathcal V_{||D_F||} \otimes \mathcal O _{\infty })})$ is a homomorphism from a unipotent vector bundle to a direct sum of Artinian modules.
Let $$\tau = {\rm Im}\left( \widehat { D_A( \mathcal Q )} \to R^{g}\hat{\mathcal S}({ D_A (\mathcal V_{||D_F||} \otimes \mathcal O _{\infty })})\right)$$ and consider the induced short exact sequence
$$0\to \tau \to R^{g}\hat{\mathcal S}({ D_A (\mathcal V_{||D_F||} \otimes \mathcal O _{\infty })})\to R^{g}\hat{\mathcal S}(D_A (\mathcal G ^\infty_{||D_F||}))\to 0.$$
Since $\tau \subset R^{g}\hat{\mathcal S}({ D_A (\mathcal V_{||D_F||} \otimes
\mathcal O _{\infty })}) $ are direct sums of Artinian modules, so
is $R^{g}\hat{\mathcal S}(D_A (\mathcal G ^\infty_{||D_F||}))$. This
implies in particular $H^i(\hat{A},R^{g}\hat{\mathcal S}(D_A
(\mathcal G ^\infty_{||D_F||}))\otimes L)=0$ for $i>0$ and any line
bundle $L$.

Since $R^{g-1}\hat{\mathcal S}(D_A (\mathcal G
^\infty_{||D_F||}))\subset \widehat{D_A(\mathcal Q)}$ is a coherent sheaf, we then
have that $R^{g-1}\hat{\mathcal S}(D_A (\mathcal G
^\infty_{||D_F||}))\otimes L$ is generated and
$H^i(\hat{A},R^{g-1}\hat{\mathcal S}(D_A (\mathcal G
^\infty_{||D_F||}))\otimes L)=0$ for $i>0$, and $L$ sufficiently
ample.

Following the proof of \cite[1.2]{Hacon04}, we consider the spectral sequence
$$E_2^{i,j}=R^i\Gamma (R^j\hat {\mathcal S}(D_A (\mathcal G ^\infty _{||D_F||}))\otimes L)$$ abutting to $R^{i+j}\Gamma (R\hat {\mathcal S}(D_A(\mathcal G^\infty _{||D_F||}))\otimes L)$.
Since $E_2^{i,j}=0$ if $(i,j)\not \in \{(0,g-1),(0,g) \}$, it
follows that the above spectral sequence degenerates at the $E_2$
term.

By \eqref{l-GD}, we have that $$D_k(R\Gamma (\mathcal G ^\infty _{||D_F||}\otimes \hat L^\vee))\cong R\Gamma (R \hat {\mathcal S}(D_A(\mathcal G ^\infty _{||D_F||}))\otimes L).$$
By \eqref{l-van}, we have that $R\Gamma
(\mathcal G ^\infty _{||D_F||}\otimes \hat L^\vee)=R^0\Gamma
(\mathcal G ^\infty _{||D_F||}\otimes \hat L^\vee)$.
Therefore, $R^{g-1}\Gamma (R\hat {\mathcal S}(D_A(\mathcal G^\infty _{||D_F||}))\otimes L)=0$ and hence $E_2^{0,g-1}=0$.

But then $R^0\Gamma (R^{g-1}\hat {\mathcal S}(D_A (\mathcal G ^\infty _{||D_F||}))\otimes L)=0$ so that
 $R^{g-1}\hat {\mathcal S}(D_A (\mathcal G ^\infty _{||D_F||}))\otimes L=0$.
\end{proof}
\begin{corollary} $\mathcal G ^\infty_{||D_F||}\cong
\mathcal V_{||D_F||} \otimes \mathcal O _{\infty}$.\end{corollary}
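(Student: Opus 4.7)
The plan is to deduce $\mathcal Q = 0$ from the Fourier--Mukai analysis of the short exact sequence
$$0 \to \mathcal G^\infty_{||D_F||} \to \mathcal V_{||D_F||} \otimes \mathcal O_\infty \to \mathcal Q \to 0$$
provided by Lemma \ref{l-ses}, combined with the vanishing $R^{g-1}\hat{\mathcal S}(D_A(\mathcal G^\infty_{||D_F||})) = 0$ established at the end of the preceding lemma. Substituting this vanishing into the four-term exact sequence appearing in that lemma's proof yields the short exact sequence
$$0 \to \widehat{D_A(\mathcal Q)} \to R^g\hat{\mathcal S}(D_A(\mathcal V_{||D_F||}\otimes \mathcal O_\infty)) \to R^g\hat{\mathcal S}(D_A(\mathcal G^\infty_{||D_F||})) \to 0,$$
so in particular $\widehat{D_A(\mathcal Q)}$ injects into the middle term.

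The next step is to contrast the natures of the two sides of this injection. On one hand, the preceding lemma identifies the middle term with $\bigoplus_{x \in {\rm Ker}(\pi_k)} T_x^*(\widehat{\mathcal V_{||D_F||}^\vee})$, a direct sum of translates of the Fourier--Mukai transform of a unipotent vector bundle, each supported at a single torsion point of $\hat A$, so its support is zero-dimensional. On the other hand, $\mathcal Q$ is coherent and Artinian by Lemma \ref{l-ses}, so its dual $D_A(\mathcal Q)$ is again Artinian (concentrated in a single cohomological degree), and Mukai's theorem then forces $\widehat{D_A(\mathcal Q)}$ to be a locally free sheaf on $\hat A$ of rank equal to the length of $D_A(\mathcal Q)$.

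To conclude, I would combine these two observations: since $\hat A$ is irreducible of positive dimension $g$, every nonzero locally free sheaf on $\hat A$ has full-dimensional support and therefore cannot embed into a sheaf whose support is zero-dimensional. This forces $\widehat{D_A(\mathcal Q)} = 0$, and the Fourier--Mukai equivalence of categories recalled in Section \ref{s-FM} then yields $D_A(\mathcal Q) = 0$ and hence $\mathcal Q = 0$, which is precisely the desired isomorphism. The main technical content has already been absorbed by the preceding three lemmas; the corollary itself reduces to a ``locally free versus Artinian'' dichotomy, and the only point requiring care is the verification that $D_A(\mathcal Q)$ lands in a single cohomological degree, which is immediate from the Artinianity of $\mathcal Q$ on the smooth $g$-dimensional variety $A$.
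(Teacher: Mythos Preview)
Your argument is correct and follows essentially the same route as the paper: both use the vanishing $R^{g-1}\hat{\mathcal S}(D_A(\mathcal G^\infty_{||D_F||}))=0$ from the preceding lemma to obtain an injection of the vector bundle $\widehat{D_A(\mathcal Q)}$ into a direct sum of torsion sheaves, forcing $\widehat{D_A(\mathcal Q)}=0$. The only cosmetic difference is that the paper then deduces $R^g\hat{\mathcal S}(D_A(\mathcal G^\infty_{||D_F||}))\cong R^g\hat{\mathcal S}(D_A(\mathcal V_{||D_F||}\otimes\mathcal O_\infty))$ and invokes the Fourier--Mukai equivalence, whereas you go directly to $\mathcal Q=0$; these are equivalent one-line conclusions.
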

\begin{proof} Since $\widehat{D_A(\mathcal Q  )}\to R^{g}\hat{\mathcal S}(D_A (\mathcal V_{||D_F||} \otimes \mathcal O _{\infty}))$ is an injection from a vector bundle to a direct sum of torsion sheaves,  $\widehat{D_A(\mathcal Q  )}=0$.
Thus $R^{g}\hat{\mathcal S}(D_A (\mathcal G ^\infty_{||D_F||}))\cong
R^{g}\hat{\mathcal S}(D_A (\mathcal V_{||D_F||} \otimes \mathcal O _{\infty}))$ and the claim easily follows cf. Section \eqref{s-FM}.
\end{proof}
We will now prove Theorem \ref{main-t}:
\begin{theorem}\label{t-k} Let $a:X\to A$ be an algebraic fiber space, $F$ the general fiber and $A$ an abelian variety. If $\kappa (X)=0$, then $\kappa (F)=0$.\end{theorem}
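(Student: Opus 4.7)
The plan is to argue by contradiction along the lines sketched in the Remark above. Assume $\kappa(F)\geq 1$. Since $\kappa(X)=0$ and $\kappa(F)\geq 1$, we may choose $m>0$ with $P_{m+1}(X)=1$ and $P_m(F)\geq 2$, and fix $D_F\in |mK_F|$. By the Corollary just proved, $\mathcal G^\infty_{||D_F||}\cong \mathcal V_{||D_F||}\otimes \mathcal O_\infty$, and by \eqref{l-V} the sheaf $\mathcal V_{||D_F||}$ is a nonzero unipotent vector bundle on $A$. Its filtration with trivial successive quotients gives $H^0(A,\mathcal V_{||D_F||})\neq 0$, and since $\mathcal O_\infty$ is the direct sum of $2$-power torsion line bundles (each killing $H^0$ of a unipotent bundle except for the trivial one), we conclude $H^0(A,\mathcal G^\infty_{||D_F||})\neq 0$. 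Passing to the colimit $\mathcal G^\infty_{||D_F||}=\varinjlim \mathcal G^k_{||D_F||}$, for some $k\gg 0$ we obtain $H^0(A_k,\mathcal F^k_{||D_F||})\neq 0$, which unwraps to a nonzero
$$\sigma_{D_F}\in H^0\bigl(X_k,\omega_{X_k}^{\otimes m+1}\otimes {\rm adj}_{F_k}(X_k,\tfrac{1}{t}D_k)\bigr)\subset H^0(X_k,\omega_{X_k}^{\otimes m+1}).$$

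Next, restrict $\sigma_{D_F}$ to the fiber $F_k\cong F$ via the adjoint-ideal exact sequence
$$0\to \mathcal J(X_k,gF_k+\tfrac{1}{t}D_k)\to {\rm adj}_{F_k}(X_k,\tfrac{1}{t}D_k)\to \mathcal J(F,D_F)\to 0.$$
The Koll\'ar-type vanishing established in the proof of \eqref{c-basic} shows that, after twisting by $\omega_{X_k}^{\otimes m+1}$, the induced map on global sections is surjective onto its $F_k$-part, so $\sigma_{D_F}|_{F_k}$ is a nonzero section of $\omega_F^{\otimes m+1}\otimes \mathcal J(F,D_F)$. Using $\omega_X|_F=\omega_F$ and the containment $\mathcal J(F,D_F)\subset \mathcal O_F(-D_F)$, the zero divisor of $\sigma_{D_F}|_{F_k}$ on $F$ must contain $D_F$ as a subdivisor.

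Finally, vary $D_F$ inside the positive-dimensional linear system $|mK_F|$. If two such sections $\sigma_{D_F}$ and $\sigma_{D_{F}'}$ were proportional in $H^0(X_k,\omega_{X_k}^{\otimes m+1})$, their common restriction would have a zero divisor $Z\in |(m+1)K_F|$ with $Z\geq D_F$ and $Z\geq D_F'$; writing $Z-D=D'$ identifies $\{D\in |mK_F|:D\leq Z\}$ with a subset of $|K_F|$, hence has dimension at most $p_g(F)-1$. Since $\dim|mK_F|\to\infty$ as $m$ grows, for $m$ sufficiently large this bound is strictly exceeded, yielding two linearly independent sections and so $P_{m+1}(X_k)\geq 2$. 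But $X_k\to X$ is \'etale, so $\kappa(X_k)=\kappa(X)=0$ forces $P_n(X_k)\leq 1$ for every $n$, a contradiction. The main technical obstacle is ensuring that $\sigma_{D_F}$ does not vanish identically along $F_k$: this is precisely what motivated working with the adjoint ideal rather than the usual multiplier ideal, and the required surjectivity of the restriction map is supplied by the vanishing machinery already set up in \eqref{c-basic}.
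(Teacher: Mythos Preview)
Your overall strategy is exactly the paper's, and the endgame dimension count (embedding $\{D_F:D_F\le Z\}$ into $|K_F|$) is a nice way to make explicit what the paper leaves implicit. However, there is a genuine gap in the step where you conclude $\sigma_{D_F}|_{F_k}\neq 0$.

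You appeal to Corollary~\ref{c-basic} to obtain surjectivity of
\[
H^0\bigl(X_k,\omega_{X_k}^{\otimes m+1}\otimes {\rm adj}_{F_k}(X_k,\tfrac1t D_k)\bigr)\longrightarrow H^0\bigl(F,\omega_F^{\otimes m+1}\otimes\mathcal J(F,D_F)\bigr),
\]
but that corollary only yields the relevant vanishing \emph{after} twisting by $a_k^*L$ with $L-(g+\epsilon)H_k$ ample; in the untwisted situation no such conclusion is available. Moreover, even granting surjectivity, that tells you every section on $F$ lifts, not that the particular class $\sigma_{D_F}$ has nonzero image. So the argument as written does not establish $\sigma_{D_F}|_{F_k}\neq 0$.

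The fix is much simpler than the vanishing machinery. Since $P_{m+1}(X_k)=1$, every $\sigma_{D_F}$ is a scalar multiple of the \emph{unique} section $\sigma\in H^0(X_k,\omega_{X_k}^{\otimes m+1})$, and this $\sigma$ is intrinsic: it is the pullback of the unique section of $\omega_X^{\otimes m+1}$ and depends neither on $D_F$ nor on $k$. Because $F$ was chosen as a \emph{general} fiber of $a$ and the zero locus of $\sigma$ is a proper closed subset of $X$, one has $\sigma|_F\neq 0$. With this in hand your argument concludes cleanly: the fixed divisor $Z={\rm div}(\sigma|_F)\in|(m+1)K_F|$ must dominate every $D_F\in|mK_F|$, so $D_F\mapsto Z-D_F$ embeds $|mK_F|$ into $|K_F|$, contradicting $\dim|mK_F|>p_g(F)-1$ for $m\gg 0$.
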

\begin{proof} 
Since $\kappa (X)=0$, $S_t:=\{ P\in {\rm Pic} ^0(X) |h^0(\omega _X^{\otimes t}\otimes P)\ne 0\}=\{\mathcal O _X \}$ for all $t>0$ sufficiently divisible cf. \cite[3.2]{CH02}. Since $\mathcal V_{||D_F||} \subset a_*(\omega _X^{\otimes m+1})$ is a unipotent vector bundle, then for any $m+1$ sufficiently divisible, we have that
$H^0(A,\mathcal G ^\infty_{||D_F||})\cong
H^0(A,\mathcal V_{||D_F||} \otimes \mathcal O _{\infty})\ne 0$.
Since $$H^0(A,\mathcal G ^\infty_{||D_F||})=\varinjlim H^0(A, \mathcal G ^k _{||D_F||}),$$ we have that
$$H^0(A, \mathcal G ^k _{||D_F||}  )\ne 0\qquad \forall k\gg 0.$$
Pick any $k$ such that
$$H^0(X_k,\omega _{X_k}^{\otimes m+1}\otimes{\rm adj}_F(X,||V_{\bullet , \epsilon, D_F}||))=
H^0(A_k,\mathcal F ^k_{||D_F||})=H^0(A, \mathcal G ^k _{||D_F||}  )\ne 0.$$
Since $\kappa (X)=0$, we have $\kappa (X_k)=0$ and hence $H^0(X_k,\omega _{X_k}^{\otimes m+1}\otimes{\rm adj}_F(X,||V_{\bullet , \epsilon, D_F}||))\cong \mathbb C$.
Let $0\ne \sigma \in H^0(X_k,\omega _{X_k}^{\otimes m+1}\otimes{\rm adj}_F(X,||V_{\bullet , \epsilon, D_F}||))$, then as the homomorphism ${\rm adj}_F(X,||V_{\bullet , \epsilon, D_F}||)\to \mathcal J (F,D_F)$ is surjective, $\sigma|_F$ vanishes along $D_F$.
If $\kappa (F)>0$, then we may assume that $D_F$ varies in a positive dimensional family $|D_F|$ and hence that $h^0(X_k,\omega _{X_k}^{\otimes m+1})>0$.
This is a contradiction and hence $\kappa (F)=0$.
\end{proof}

We will now prove Theorem \ref{T-I}:
\begin{theorem}Let $f: X \to Y $ be an algebraic fiber space with $Y$ of maximal
Albanese dimension and general fiber $F$. Then $\kappa(X) \ge
\kappa(Y)+ \kappa(F)$.
\end{theorem}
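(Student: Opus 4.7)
The plan is to reduce the theorem to the case where $Y=A$ is an abelian variety, and then to treat that case by an extension of the arguments used in the proof of Theorem~\ref{main-t}.

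For the reduction, recall that by \cite{Kawamata81}, since $Y$ is of maximal Albanese dimension one has $\kappa(Y)\geq 0$. We may assume $\kappa(F)\geq 0$ (else the inequality is trivial) and $\kappa(Y)<\dim Y$ (else we apply item~(1) of the introduction directly). Let $h\colon Y\to W$ be the Iitaka fibration of $Y$. By the theorem of Kawamata on MAD varieties \cite{Kawamata81}, general fibers $G$ of $h$ are themselves MAD with $\kappa(G)=0$, hence birational to abelian varieties; moreover $W$ is of general type with $\dim W=\kappa(Y)$. Form the Stein factorization of $X\to Y\to W$ as $g\colon X\to Z$ followed by a finite map $Z\to W$; then $Z$ is of general type with $\kappa(Z)=\kappa(Y)$, and a general fiber $X_z$ of $g$ carries a fiber space structure $X_z\to G$ with general fiber $F$. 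Since $Z$ is of general type, item~(1) of the introduction applied to $g$ yields
\[
\kappa(X)\geq\kappa(X_z)+\kappa(Z)=\kappa(X_z)+\kappa(Y),
\]
so it suffices to establish Theorem~\ref{T-I} in the case when the base is an abelian variety.

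For the abelian base case $a\colon X\to A$: for each sufficiently divisible $m$ and each $D_F\in|mK_F|$, construct the sheaves $\mathcal{G}^\infty_{||D_F||}$ of Section~3. Adapting the argument used for Theorem~\ref{t-k}, the vanishing result of Lemma~\ref{l-van} combined with the Fourier--Mukai/generic vanishing machinery produces, on a suitable étale cover $X_k\to X$ and possibly after a torsion twist by some $P\in\mathrm{Pic}^0(A_k)$, a nonzero section $\sigma_{D_F}$ of $\omega_{X_k}^{\otimes(m+1)}\otimes a_k^*P$ whose restriction to the fiber $F_k\cong F$ lies in $H^0(F,\omega_F^{\otimes(m+1)})$ and vanishes along $D_F$. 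Letting $D_F$ vary in $|mK_F|$ and tracking the resulting restriction map into $H^0(F,\omega_F^{\otimes(m+1)})$, one extracts the quantitative estimate $P_{m+1}(X_k)\gtrsim m^{\kappa(F)}$ for $m$ large, whence $\kappa(X)=\kappa(X_k)\geq\kappa(F)$ since étale covers preserve Kodaira dimension.

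The main obstacle lies in this last quantitative step. The proof of Theorem~\ref{t-k} only needed the qualitative fact that a single nonzero section cannot vanish on a positive-dimensional linear system, because the hypothesis $P_{m+1}(X)=1$ made the space of sections one-dimensional. In the present setting, one must instead control the dimension of the image of the restriction map, tracking how distinct $D_F\in|mK_F|$ give rise to linearly independent sections modulo the torsion twists $P$ and the étale covers $X_k$. Executing this, so that the growth in $D_F$ translates to the expected growth $m^{\kappa(F)}$ of plurigenera of $X$, is the heart of the argument.
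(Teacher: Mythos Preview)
Your reduction to the abelian base is essentially the same as the paper's (the paper phrases it via Kawamata's splitting $Y\cong W\times K$ after an \'etale cover, but the content is the same: reduce to a general-type base plus an abelian base). The real divergence is in how you treat the abelian case $a:X\to A$, and here your plan has a genuine gap.

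You propose to run the Fourier--Mukai/generic vanishing machinery of Section~4 for an arbitrary $X$ with $\kappa(X)\geq 0$ and extract a quantitative plurigenera estimate. But the hypothesis $\kappa(X)=0$ is not only used at the very end of Theorem~\ref{t-k}; it is what makes the whole apparatus work. Lemma~\ref{l-V} uses $P_{m+1}(X_k)=1$ to conclude that each $\mathcal V^k_{||D_F||}$ is a \emph{unipotent} vector bundle; this unipotency is what forces the equality $\mathcal W^k_{||D_F||}=\mathcal V_{||D_F||}\otimes{\pi_k}_*\mathcal O_{A_k}$, which in turn underlies Corollary~\ref{c-V} and the coherence of the cokernel $\mathcal Q$ in Lemma~\ref{l-ses}. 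Without unipotency you have no control over $\mathcal Q$, the spectral-sequence argument comparing $R\hat{\mathcal S}(D_A(\mathcal G^\infty))$ with $R\hat{\mathcal S}(D_A(\mathcal V\otimes\mathcal O_\infty))$ collapses, and you cannot conclude that $R\hat{\mathcal S}(D_A(\mathcal G^\infty))$ is a sheaf. Lemma~\ref{l-van} alone (which does not need $\kappa(X)=0$) is not enough to produce the section you want. So the ``main obstacle'' you identify is real, and it sits earlier in the argument than you suggest.

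The paper sidesteps all of this. For the abelian base case it does \emph{not} attempt a direct plurigenera estimate; instead it takes the Iitaka fibration $\phi:X\to Z$ of $X$. A general fiber $G$ of $\phi$ satisfies $\kappa(G)=0$, and its image $K=f(G)\subset A$ is an abelian subvariety with $G\to K$ the Albanese map. Now Theorem~\ref{t-k} applies verbatim to $G\to K$ and gives $\kappa(F')=0$ for $F'=F\cap G$. Easy addition for $\phi|_F:F\to Z$ then yields $\kappa(F)\leq \kappa(F')+\dim(\mathrm{im}\,\phi|_F)\leq \dim Z=\kappa(X)$. In other words, the Iitaka fibration of $X$ lets one recycle the $\kappa=0$ result rather than strengthen it. (The paper also handles $\kappa(X)<0$ separately via \cite{Lai09}, which you did not address; your reduction implicitly assumes you can prove $\kappa(X)\geq \kappa(F)$ over an abelian base even before knowing $\kappa(X)\geq 0$.)
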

\begin{proof}
Assume that $\kappa (X)<0$, and let $a_X:X\to A_X$ (resp. $a_Y: Y
\to A_Y$)  be the albanese morphism of $X$ (resp. of $Y$). Let
$\alpha_X: X \to X'$ (resp. $\alpha_Y: Y \to Y'$) be the Stein
factorization of $a_X$ (resp.  $a_Y$). Since $f_* \mathcal O_X=
\mathcal O_Y$, it follows that $X\to Y'$ is the Stein
factorization of $X \to A_Y$. But then $f': X' \to Y'$
is an algebraic fiber space and the following diagram
is commutative

$$\begin{CD}
X @>{\alpha_X}>> X' @>{\beta_X}>> A_X \\
@V{f}VV @V{f'}VV @VVV\\
Y @>{\alpha_Y}>> Y' @>{\beta_Y}>> A_Y.
\end{CD}
$$

Since $Y$ is of maximal Albanese dimension, $\alpha_Y$ is
birational. Let $G$ be the general fiber of $\alpha_X: X \to X'$ and
$E$ be the general fiber of $X' \to Y'$. Then $F$ maps
surjectively on to $E$ with general fiber $G$. By \cite[3.1]{Lai09} (applied to $X\to X'$),
one sees that $\kappa (G)<0$. By the easy addition formula (applied to $F\to E$), one sees
that $\kappa(F) <0$. Thus the Theorem is true if $\kappa (X)<0$.

We may therefore assume that $\kappa (X)\geq 0$. We will follow the
arguments of \cite[6.4]{Mori85}.

We will first show that the statement holds when $\kappa (Y)=0$. In
this case, by Kawamata's Theorem (cf. \cite{Kawamata81}), we may assume that $Y$ is an
abelian variety. Let $G$ be the general fiber of $\phi:X \to Z$, the
Iitaka fibration of $X$, then $\kappa (G )=0$. Let $K=f(G)$, then $K$ is an abelian subvariety of $Y$ and $f|_G: G \to
K$ is the Albanese map of $G$. The general fiber of $f|_G$ is
$F':=F \cap G$. By \eqref{t-k}, we have $\kappa(F')=0$. On the
other hand, $F'$ can is the general fiber of $ \phi_F: F \to
 Z$. By easy addition, we have $$ \kappa(F) \le \kappa(F')+\dim
({\rm im} (\phi_F)) \le \dim Z = \kappa(X).$$


We now consider the general case. Let $Y \to A_Y$ be the Albanese
map of $Y$ and  $Y'$ be the Stein factorization of $Y \to A_Y$. Then
$Y \to Y'$ is birational. We may thus replace $Y$ by $Y'$ and
assume that $Y \to A_Y$ is finite. By
\cite[Theorem 13]{Kawamata81}, after replacing $X$ and $Y$ by \'etale covers, we may
assume that there is an abelian variety $K$ and a variety of general type
$W$ such
that $Y$ is isomorphic to $W\times K$.
Note that $\kappa (W)=\dim W =\kappa (Y)$. Let $E$ be the general fiber of the algebraic
fiber space $X \to W$. By \cite[Theorem 3]{Kawamata81}, one has $$\kappa(X) \ge
\kappa(E)+ \kappa(W) = \kappa(E) + \kappa(Y).$$ Notice also that
$f|_E: E \to K$ has general fiber $F$. Since $\kappa (K)=0$, by what we have shown above, we have  $$\kappa(E) \ge \kappa(F)+\kappa (K)=\kappa(F).$$ Hence we have
$$ \kappa(X) \ge \kappa(Y)+\kappa(F),$$ as required.

\end{proof}


\begin{thebibliography}{Kawamata85}

\bibitem[Eisenstein10]{Eisenstein10}
E. Eisenstein, {\it Generalizations of the restriction theorem for multiplier ideals}, arXiv:1001.2841v1
\bibitem[CH02]{CH02}
J.A. Chen, C.D. Hacon, {\it Linear series of irregular varieties.}  Algebraic geometry in East Asia (Kyoto, 2001),  143--153, World Sci. Publ., River Edge, NJ, 2002.
\bibitem[CH09]{CH09}
J.A. Chen, C.D. Hacon, {\it On Ueno's conjecture K. }
Math. Ann. 345 (2009), no. 2, 287--296.
\bibitem[Hacon04]{Hacon04}
Hacon, {\it A derived category approach to generic vanishing.}
J. reine angew. Math {\bf 575} (2004), 173-187
\bibitem[Hartshorne77]{Hartshorne77}
R. Hartshorne, {\it Algebraic Geometry, Grad. Texts Math. 52, Springer Verlag,} 1977.
\bibitem[Kawamata81]{Kawamata81}
Y. Kawamata, {\it Characterization of abelian varieties.} Compos. Math. 43,  253--276 (1981)
\bibitem[Kawamata82]{Kawamata82}
Y. Kawamata, {\it Kodaira Dimension of algebraic fiber spaces over curves.} Invent. Math. 66,  57--71 (1982)
\bibitem[Kawamata85]{Kawamata85}
Y. Kawamata, {\it Minimal models and the Kodaira dimension of algebraic fiber spaces.} J. Reine Angew. Math. 363, 1--46 (1985)
\bibitem[Koll\'ar87]{Kollar87}
J. Koll\'ar, {\it Subadditivity of the Kodaira dimension: fibers of general type. Algebraic geometry,} Sendai, 1985. Adv. Stud. Pure Math., vol. 10, pp. 361--398, North-Holland, Amsterdam (1987)
\bibitem[Koll\'ar95]{Kollar95}
J. Koll\'ar, {\it  Shafarevich maps and automorphic forms}, M. B. Porter Lectures, Princeton Univ. Press, Princeton 1995.
\bibitem[Lai09]{Lai09}
C.-J. Lai, {\it Varieties fibered by good minimal models}, arXiv:0912.3012v2
\bibitem[Lazarsfeld04]{Lazarsfeld04}
R. Lazarsfeld, {\it Positivity in algebraic geometry.} II, Ergebnisse der Mathematik und ihrer Grenzge- biete. 3. Folge. A Series of Modern Surveys in Mathematics [Results in Mathematics and Re- lated Areas. 3rd Series. A Series of Modern Surveys in Mathematics], vol. 49, Springer-Verlag, Berlin, 2004, Positivity for vector bundles, and multiplier ideals.
\bibitem[Mori85]{Mori85}
S. Mori, {\it Classification of higher-dimensional varieties.}  Algebraic geometry, Bowdoin, 1985 (Brunswick, Maine, 1985),  269--331, Proc. Sympos. Pure Math., 46, Part 1, Amer. Math. Soc., Providence, RI, 1987.
\bibitem[Mukai81]{Mukai81}
S. Mukai, {\it Duality between $D(X)$ and $D(\hat{X})$ with its application to Picard sheaves.} Nagoya Math. J. 81, 153--175 (1981)
\bibitem[Mumford70]{Mumford70}
D. Mumford, {\it Abelian Varieties.} Tata Institute of Fundamental Research Studies in Mathematics, No. 5  Published for the Tata Institute of Fundamental Research, Bombay; Oxford University Press, London 1970 viii+242 pp.
\bibitem[Viehweg82]{Viehweg82}
E. Viehweg, {\it Die additivit\"at der Kodaira Dimension f\"ur projektive Fassera\"ume \"uber Variet\"aten des allgemeinen Typs}, J. Reine Angew. Math. {\bf 330} (1982), 132-142.
\bibitem[Viehweg83]{Viehweg83}
E. Viehweg, {\it Weak positivity and the additivity of the Kodaira dimension of certain fiber spaces.}, Adv. Stud. Pure Math. {\bf 1} (1983), 329-353.
\bibitem[Viehweg95]{Viehweg95}
E. Viehweg, {\it Quasi-projectivity moduli for polarized manifolds}, Ergebn. Math. {\bf 30}, Springer, 1995.
\end{thebibliography}
\end{document}